\newcommand{\dd}{\textup{d}}
\def\eps{\varepsilon}
\def\E{\mathbb{E}}
\def\P{\mathbb{P}}
\def\R{\mathbb{R}}
\def\os{\tau}
\def\t{\widetilde{\tau}}
\def\wt{\widetilde{\tau}}
\def\ot{\tau}
\newtheorem{theorem}{Theorem}
\newtheorem{lemma}[theorem]{Lemma}
\newtheorem{proposition}[theorem]{Proposition}
\theoremstyle{plain}
\theoremstyle{remark}
\theoremstyle{definition}
\begin{document}

% Use the \preprint command to place your local institutional report
% number in the upper righthand corner of the title page in preprint mode.
% Multiple \preprint commands are allowed.
% Use the 'preprintnumbers' class option to override journal defaults
% to display numbers if necessary
%\preprint{}

%Title of paper
\title{Extreme statistics of superdiffusive L{\'e}vy flights and every other L{\'e}vy subordinate Brownian motion}

% repeat the \author .. \affiliation  etc. as needed
% \email, \thanks, \homepage, \altaffiliation all apply to the current
% author. Explanatory text should go in the []'s, actual e-mail
% address or url should go in the {}'s for \email and \homepage.
% Please use the appropriate macro foreach each type of information

% \affiliation command applies to all authors since the last
% \affiliation command. The \affiliation command should follow the
% other information
% \affiliation can be followed by \email, \homepage, \thanks as well.
\author{Sean D. Lawley\thanks{Department of Mathematics, University of Utah, Salt Lake City, UT 84112 USA (\texttt{lawley@math.utah.edu}).}
}
\date{\today}
\maketitle

\begin{abstract}
The search for hidden targets is a fundamental problem in many areas of science, engineering, and other fields. Studies of search processes often adopt a probabilistic framework, in which a searcher randomly explores a spatial domain for a randomly located target. There has been significant interest and controversy regarding optimal search strategies, especially for superdiffusive processes. The optimal search strategy is typically defined as the strategy that minimizes the time it takes a given single searcher to find a target, which is called a first hitting time (FHT). However, many systems involve multiple searchers and the important timescale is the time it takes the fastest searcher to find a target, which is called an extreme FHT. In this paper, we study extreme FHTs for any stochastic process that is a random time change of Brownian motion by a L{\'e}vy subordinator. This class of stochastic processes includes superdiffusive L{\'e}vy flights in any space dimension, which are processes described by a Fokker-Planck equation with a fractional Laplacian. We find the short-time distribution of a single FHT for any L{\'e}vy subordinate Brownian motion and use this to find the full distribution and moments of extreme FHTs as the number of searchers grows. We illustrate these rigorous results in several examples and numerical simulations.
\end{abstract}

%%%%%%%%%%%%%%%%%%%%%%%%%%%%%%%%%%%%%%%%%%%%%%%%%%%%%%%%%%%%%%%%%%%%%%%%%%%%%%%%%%%%%%%%%%%%%%%%%%%%%%%%%%%%%%%%%%%%%%%%%%%%%%%%%%%%%%%%%%%%%%%%%%%%%%%%%%%%%%%%%%%%%%%%%%%%%%%%%%%%%%%%%%%%%%%%%%%%%%%%%%%%%%%%%%%%%%%%%%%%%%%%%%%%%%%%%%%%%%%%%%%%%%%%%%%%
%%%%%%%%%%%%%%%%%%%%%%%%%%%%%%%%%%%%%%%%%%%%%%%%%%%%%%%%%%%%%%%%%%%%%%%%%%%%%%%%%%%%%%%%%%%%%%%%%%%%%%%%%%%%%%%%%%%%%%%%%%%%%%%%%%%%%%%%%%%%%%%%%%%%%%%%%%%%%%%%%%%%%%%%%%%%%%%%%%%%%%%%%%%%%%%%%%%%%%%%%%%%%%%%%%%%%%%%%%%%%%%%%%%%%%%%%%%%%%%%%%%%%%%%%%%%%%%%%%%%%%%%%%%%%%%%%%%%%%%%%%%%%%%%%%%%%%%%%%%%%%%%%%%%%%%%%%%%%%%%%%%%%%%%%%%%%%%%%%%%%%%%%%%%%%%%%
\section{Introduction}

What is the best way to search for a target whose location is \emph{a priori} unknown? This basic search problem arises at various spatial and temporal scales in many areas of science, engineering, and other fields \cite{benichou2011rev}. Examples include rescuers searching for castaways \cite{frost2001}, military forces searching for enemy targets \cite{morse1956},  animals searching for food, shelter, or a mate \cite{reynolds2018, benichou2011rev, viswanathan2008}, proteins searching for DNA binding sites \cite{lomholt2005}, and computers searching a database \cite{kao1996}. 

Empirical and theoretical studies of search processes often adopt a probabilistic framework, in which the searcher randomly explores a spatial domain for a randomly located target \cite{shlesinger2006, benichou2011rev, viswanathan2008}. The random movement of the searcher is often classified as diffusive, subdiffusive, or superdiffusive, depending respectively on whether the square of its displacement scales linearly, sublinearly, or superlinearly in time. While subdiffusive and superdiffusive motion are termed ``anomalous'' diffusion, they have been observed in many physical and biological systems \cite{metzler2004}.

Mathematical models of random search often assume that searchers explore space via a continuous-time random walk. In this framework, a searcher waits at its current location for a random time chosen from some waiting time probability density $w(t)$, and then moves to a new location by jumping a random distance chosen from some jump length probability density $l(y)$. The searcher repeats these two steps indefinitely or until it reaches the target. This process can be diffusive, subdiffusive, or superdiffusive, depending on the tails of the waiting time density $w(t)$ and the jump length density $l(y)$. In particular, if the mean waiting time is finite and the jump length density has the following slow power law decay,
\begin{align}\label{pl}
l(y)
\propto y^{-1-\alpha}\quad\text{as $y\to\infty$ for some $\alpha\in(0,2)$},
\end{align}
then the process is superdiffusive and is often called a L{\'e}vy flight \cite{metzler2004, dubkov2008}. 
In a certain scaling limit, the probability density ${{p}}(x,t)$ for the position of a L{\'e}vy flight in $\R^{d}$ satisfies the space fractional Fokker-Planck equation \cite{meerschaert2019}.
\begin{align}\label{ffpe0}
\partial_{t}{{p}}=-{{K}}(-\Delta)^{\alpha/2}{{p}},\quad x\in\R^{d},\,t>0,
\end{align}
where $(-\Delta)^{\alpha/2}$ denotes the fractional Laplacian \cite{lischke2020} and ${{K}}>0$ is the generalized diffusion coefficient. 
Similar models of superdiffusive search involving long relocation events with distances chosen from a power law density akin to \eqref{pl} are L{\'e}vy walks \cite{zaburdaev2015} and truncated L{\'e}vy flights \cite{viswanathan2008}.

Many have argued that superdiffusion is a more efficient search method compared to normal diffusion, since superdiffusive processes spend less time in previously explored regions of space \cite{shlesinger1986, viswanathan1996, viswanathan1999}. Signatures of superdiffusion have been found in movement data for many different animal species \cite{reynolds2018}, including albatrosses \cite{viswanathan1996}, spider monkeys \cite{ramos2004, boyer2006}, jackals \cite{atkinson2002}, sharks \cite{sims2006}, microorganisms \cite{bartumeus2003}, and also within biological cells \cite{reverey2015}. In addition, superdiffusive search methods involving L{\'e}vy flights are employed in computational algorithms such as simulated annealing \cite{pavlyukevich2007}. However, there has been controversy regarding the empirical evidence of superdiffusion in animal foraging, as some have questioned the accuracy of the statistical methods used in some of these studies \cite{edwards2007}.

Furthermore, the theoretical optimality of superdiffusive search models has also been controversial. Indeed, the seminal work of Viswanathan et al.\ \cite{viswanathan1999} in 1999 claimed that the search time of a single searcher is minimized if the searcher employs an inverse square L{\'e}vy walk (corresponding to $\alpha=1$ in \eqref{pl}). This result forms the core of the very influential L{\'e}vy flight foraging hypothesis, which states that biological organisms must have evolved to perform such L{\'e}vy walks because of their optimality \cite{viswanathan2008}. However, a recent analysis proved that this founding result of the L{\'e}vy flight foraging hypothesis is incorrect \cite{levernier2020, buldyrev2021, levernier2021reply}. 

Search time is often quantified in terms of a so-called first hitting time (FHT), which is the first time the searcher reaches the target. If $X=\{X(t)\}_{t\ge0}$ denotes the position of the searcher as a function of time $t\ge0$, then the FHT is
\begin{align}\label{fht}
\tau
:=\inf\{t>0:X(t)\in U\},
\end{align}
where $U$ denotes the position of the target(s) (or the region of space in which the searcher can detect the target). There have been many studies of FHTs of L{\'e}vy flights, using both computational and analytical approaches \cite{eliazar2004, koren2007, koren2007first, gao2014, palyulin2019, wardak2020}. An interesting aspect of these studies is the discrepancy between first hitting events (the searcher reaches or hits the target) and first passage events (the searcher moves beyond the target). While these two notions are equivalent for standard diffusion processes, paths of L{\'e}vy flights are discontinuous and thus may jump across a target without actually hitting it, which is called a ``leapover'' \cite{koren2007, koren2007first, palyulin2019, wardak2020}.

Studies of optimal search strategies generally ask what search method minimizes the FHT of a single searcher \cite{shlesinger2006, benichou2011rev, viswanathan2008}. However, many systems involve multiple searchers and the relevant timescale is the time it takes the fastest searcher to find the target. Indeed, this can be the case for many of the traditional search scenarios referenced above, such as the search for missing persons and castaways, military searches for enemy targets, and computer search processes. In the context of biology, cellular events are often triggered when the first of many searchers finds a target \cite{schuss2019}, and cooperative foraging involves multiple animals working together to find a target \cite{schoener1971, traniello1977, holldobler1990, wenzel1991, jarvis1998, torney2009, torney2011, feinerman2012}. If $\tau_{1},\dots,\tau_{N}$ are the respective FHTs of $N$ parallel searchers, then the fastest searcher finds the target at time
\begin{align}\label{TN}
T_{N}
:=\min\{\tau_{1},\dots,\tau_{N}\},
\end{align}
which is often called an extreme FHT, fastest FHT \cite{lawley2020esp1}, or parallel FHT \cite{ro2017, clementi2020}. Hence, in these scenarios the relevant question is not what search strategy minimizes the single searcher FHT $\tau$ in \eqref{fht}, but rather what search strategy minimizes the extreme FHT $T_{N}$ in \eqref{TN}.

In this paper, we investigate the extreme FHTs of a general class of stochastic processes which includes superdiffusive L{\'e}vy flights in $\R^{d}$ for any $d\ge1$. The class of stochastic processes is called L{\'e}vy subordinate Brownian motions, as the processes are obtained by random time changes of Brownian motion. We find the short-time distribution of a single FHT $\tau$ and then use this to find the full distribution and moments of the extreme FHT $T_{N}$ as the number of searchers $N$ grows.

%%%%%%%%%%%%%%%%%%%%%%%%%%%%%%%%%
\begin{figure}
  \centering
             \includegraphics[width=0.465\textwidth]{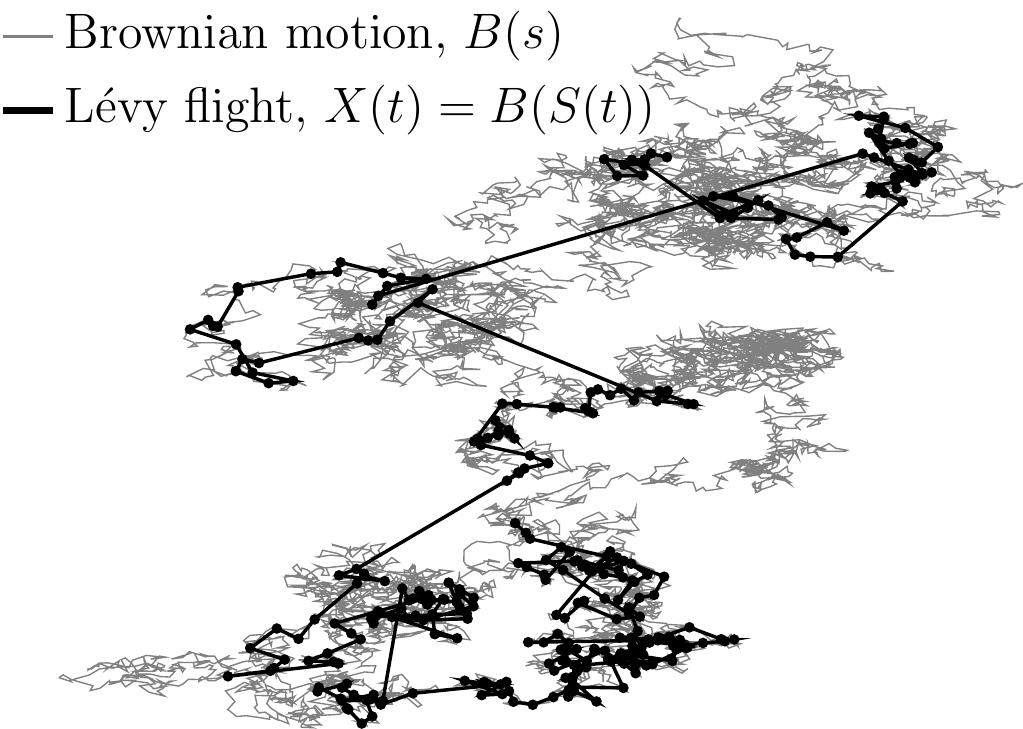}
                              \qquad
               \includegraphics[width=0.465\textwidth]{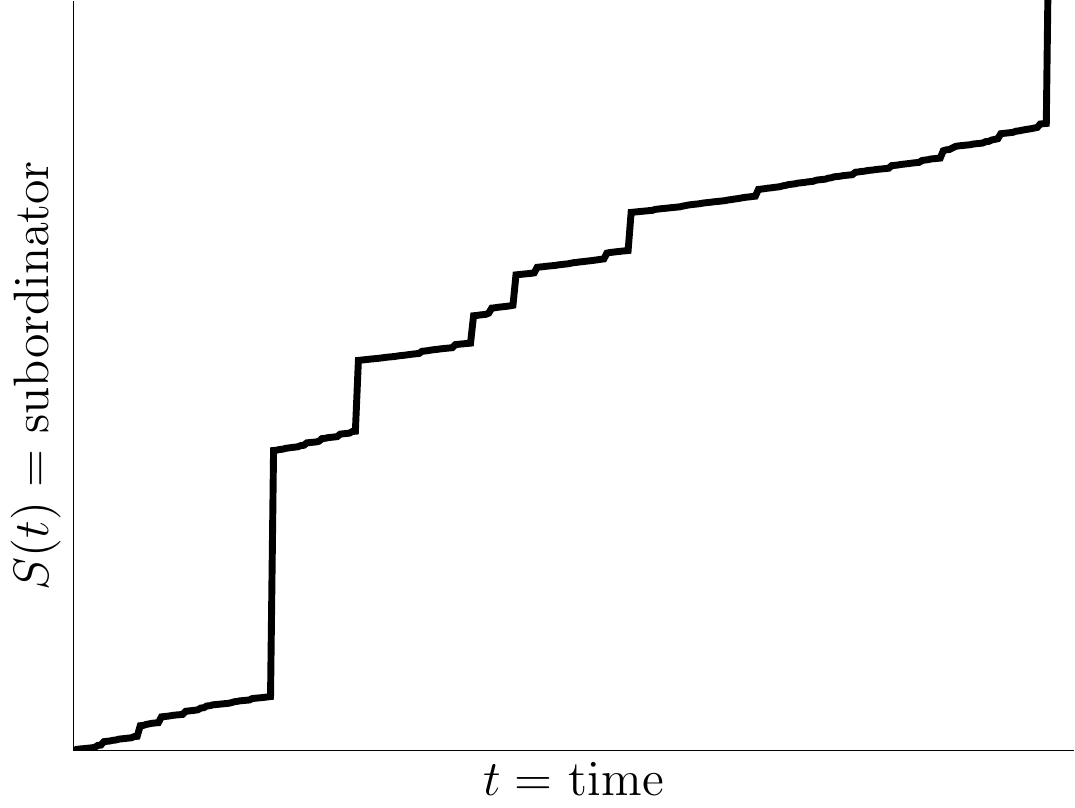}
 \caption{L{\'e}vy flights are subordinate Brownian motions. Left: The thin gray trajectory is the path of a Brownian motion $B(s)$, and the thick black trajectory is the path of a L{\'e}vy flight $X(t)=B(S(t))$ obtained as a random time change of $B(s)$ according to the $(\alpha/2)$-stable subordinator $S(t)$. Right: The path of the $(\alpha/2)$-stable subordinator $S(t)$. We take $\alpha=1.5$ in this plot.}
 \label{figschem}
\end{figure}
%%%%%%%%%%%%%%%%%%%%%%%%%%%%%%%%%

To summarize our results, let ${{B}}=\{{{B}}(s)\}_{s\ge0}$ be a $d$-dimensional Brownian motion with unit diffusivity, and let $S=\{S(t)\}_{t\ge0}$ be an independent subordinator, which means that $S$ is a one-dimensional, nondecreasing L{\'e}vy process with $S(0)=0$. Define the path of a single searcher $X=\{X(t)\}_{t\ge0}$ by
\begin{align}\label{tc0}
X(t)
:={{B}}(S(t))+X(0)\in\R^{d},\quad t\ge0,
\end{align}
where $X(0)$ is some initial position independent of ${{B}}$ and $S$. That is, $X$ is a random time change of Brownian motion (see Figure~\ref{figschem} for the special case that $X$ is a L{\'e}vy flight). Assuming that $X(0)$ cannot lie in the target $U$, we prove that the FHT in \eqref{fht} has the universal short-time distribution,
\begin{align}\label{st0}
\P(\tau\le t)
\sim\P(X(t)\in U)
\sim \rho t\quad\text{as }t\to0+,
\end{align}
where $\rho\in(0,\infty)$ is the rate,
\begin{align}\label{rho0}
\rho
:=\int_{0}^{\infty}\P({{B}}(s)+X(0)\in U)\,\nu(\dd s),
\end{align}
and $\nu(\dd s)$ is the L{\'e}vy measure of $S$. Throughout this paper, ``$f\sim g$'' means $f/g\to1$. If we set $X(0)=0$, then the Gaussianity of ${{B}}(s)$ means the integrand in \eqref{rho0} is
\begin{align*}
\P({{B}}(s)+X(0)\in U)
=\frac{1}{(4\pi s)^{d/2}}\int_{U}\exp\Big(\frac{-\|x\|^{2}}{4s}\Big)\,\dd x.
\end{align*}
We prove \eqref{st0} for any nondeterministic subordinator $S$ and any target set $U\subset\R^{d}$ that is nonempty and is the closure of its interior. 

%%%%%%%%%%%%%%%%%%%%%%%%%%%%%%%%%
\begin{figure}
  \centering
        \includegraphics[width=0.6\textwidth]{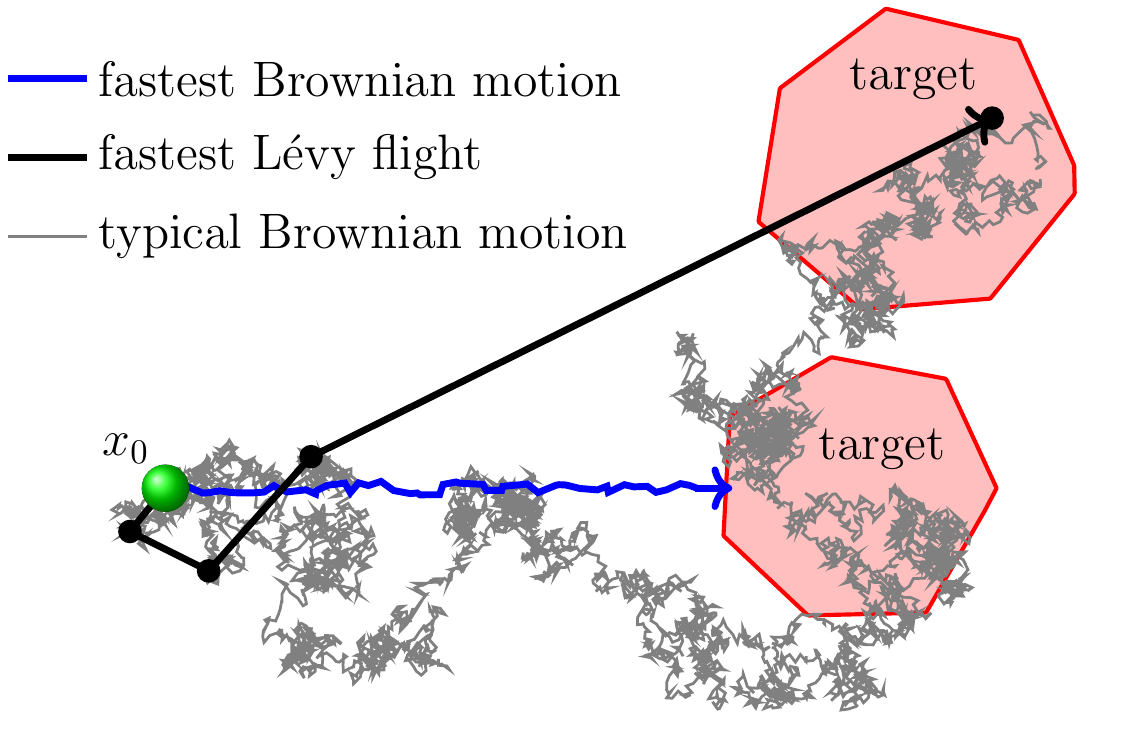}
 \caption{Fastest FHTs of Brownian motions versus L{\'e}vy flights. Starting from $x_{0}$ (green ball), the thick blue path illustrates a Brownian motion that is the first to hit the target (red regions) out of many iid Brownian motions. Such fastest Brownian motions tend to follow the shortest path to the target. The thick black path illustrates the fastest L{\'e}vy flight out of many iid L{\'e}vy flights, which does not take the shortest path to the target. This fastest L{\'e}vy flight is obtained as a random time change of a typical Brownian motion (thin gray path) that wanders around and moves in and out of targets. This illustration of the fastest L{\'e}vy flight is characteristic of any subordinate Brownian motion.}
 \label{figschem2}
\end{figure}
%%%%%%%%%%%%%%%%%%%%%%%%%%%%%%%%%

Furthermore, if $\tau_{1},\dots,\tau_{N}$ are independent and identically distributed (iid) realizations of the FHT $\tau$, then we prove that $(\rho N)T_{N}$ converges in distribution to a unit rate exponential random variable as $N$ grows, which means
\begin{align}\label{cd}
\P\Big(T_{N}>\frac{z}{\rho N}\Big)
\to e^{-z}\quad\text{as $N\to\infty$ for each $z\ge0$}.
\end{align}
Hence, $T_{N}$ is approximately exponentially distributed with rate $\rho N$. Furthermore, if $\E[T_{N}]<\infty$ for some $N\ge1$, then we obtain all the moments of $T_{N}$ for large $N$. In particular, we prove that
\begin{align}\label{decay}
\E[T_{N}]
&\sim\sqrt{\text{Variance}[T_{N}]}
\sim\frac{1}{\rho N}\quad\text{as }N\to\infty.
\end{align}
We also extend \eqref{cd} and \eqref{decay} to the $k$th fastest FHT for any $1\le k\ll N$. In the case that $X$ is a superdiffusive L{\'e}vy flight whose probability density satisfies the fractional Fokker-Planck equation in \eqref{ffpe0}, the L{\'e}vy measure $\nu$ used in the rate $\rho$ in \eqref{rho0} is
\begin{align}\label{nu0}
\nu(\dd s)
={{K}}\frac{\alpha/2}{\Gamma(1-\alpha/2)}\frac{1}{s^{1+\alpha/2}}\dd s,\quad s>0.
\end{align}
We emphasize that our results hold for any nondeterministic subordinator $S$ (meaning we exclude only the trivial case that $S(t)$ is a deterministic function $bt$ for some $b\ge0$). In particular, the L{\'e}vy measure $\nu$ of the subordinator $S$ need not have the slow power law decay in \eqref{nu0} which gives rise to long jumps of $X$ in \eqref{tc0}. Examples of other subordinators commonly used in modeling are given in section~\ref{othersubs}.

Before outlining the rest of the paper, we comment on how our results on subordinate Brownian motions relate to extreme statistics and large deviation theory for standard diffusion processes (i.e.\ processes satisfying a standard drift-diffusion It\^{o} stochastic differential equation). First, the $1/N$ decay in \eqref{decay} is much faster than the well-known $1/\ln N$ decay for diffusion processes \cite{weiss1983}. Indeed, the extreme FHT for diffusion processes has the following rather slow decay in mean as the number of searchers $N$ grows \cite{lawley2020uni},
\begin{align}\label{diff0}
\E[T_{N}^{\text{diff}}]
\sim\frac{L^{2}}{4D\ln N}\quad\text{as }N\to\infty,
\end{align}
where $L>0$ is a certain geodesic distance from the possible searcher starting locations to the target and $D>0$ is the characteristic diffusivity. Further contrasting \eqref{decay} and \eqref{diff0}, a salient feature of extreme FHTs of diffusion processes is that they only depend on the shortest path to the target since the fastest searchers follow this geodesic path \cite{lawley2020uni}. In particular, extreme FHTs of diffusion are unaffected by changes to the problem outside of this path, such as altering the size of the target, the size of the domain, or even the space dimension $d\ge1$. In contrast, it is evident from the formula for the rate $\rho$ in \eqref{rho0} that the extreme FHTs of subordinate Brownian motion depend on all these global properties of the problem, which reflects the fact that the fastest subordinate Brownian searchers do not take a direct path to the closest part of the target. These results are illustrated in Figure~\ref{figschem2} for the case of a L{\'e}vy flight (though the illustration is characteristic of any subordinate Brownian motion).

These differences stems from the difference between our result in \eqref{st0} for subordinate Brownian motion and Varadhan's formula for diffusion processes \cite{varadhan1967}. Varadhan's formula is a celebrated result in large deviation theory which implies that if $X^{\text{diff}}=\{X^{\text{diff}}(t)\}_{t\ge0}$ is a diffusion process, then
\begin{align}\label{varadhan0}
\lim_{t\to0+}t\ln\P(X^{\text{diff}}(t)\in U)
=-L^{2}/(4D)<0,
\end{align}
where $L$ and $D$ are as in \eqref{diff0}. The result in \eqref{st0} can thus be interpreted as a type of Varadhan's formula for subordinate Brownian motion. 

The rest of the paper is organized as follows. In section~\ref{prelim}, we review some definitions and results from probability theory. In section~\ref{math}, we present our general mathematical results. In section~\ref{examples}, we illustrate our results in several examples and compare the theory to numerical simulations. We conclude by discussing relations to prior work. Proofs are presented in the appendix.

%%%%%%%%%%%%%%%%%%%%%%%%%%%%%%%%%%%%%%%%%%%%%%%%%%%%%%%%%%%%%%%%%%%%%%%%%%%%%%%%%%%%%%%%%%%%%%%%%%%%%%%%%%%%%%%%%%%%%%%%%%%%%%%%%%%%%%%%%%%%%%%%%%%%%%%%%%%%%%%%%%%%%%%%%%%%%%%%%%%%%%%%%%%%%%%%%%%%%%%%%%%%%%%%%%%%%%%%%%%%%%%%%%%%%%%%%%%%%%%%%%%%%
\section{Preliminaries}\label{prelim}

We begin by reviewing properties of subordinators, subordinate Brownian motions, L{\'e}vy flights, fractional Laplacians, and related concepts.

%%%%%%%%%%%%%%%%%%%%%%%%%%%%%%%%%%%%%%%%%%%%%%%%%%%%%%%%%%%%%%%%%%%%%%
\subsection{Subordinators}\label{sub}

A L{\'e}vy process is a continuous-time stochastic process that has iid increments and satisfies certain technical conditions \cite{bertoin1996}. 
A subordinator is a one-dimensional, nondecreasing L{\'e}vy process $S=\{S(t)\}_{t\ge0}$ with $S(0)=0$. The distribution of $S$ is determined by its Laplace exponent ${{\Phi}}(\beta)$, which satisfies
\begin{align}
\E\big[e^{-\beta S(t)}\big]
&=e^{-t{{\Phi}}(\beta)},\quad\text{for all $t\ge0$ and $\beta\ge0$},\nonumber
\\
{{\Phi}}(\beta)
&=b\beta
+\int_{0}^{\infty}(1-e^{-\beta s})\,\nu(\dd s),\quad\text{for all $\beta\ge0$},\label{le}
\end{align}
where $b\ge0$ is the drift and $\nu$ is the L{\'e}vy measure. In particular, $\nu$ satisfies 
\begin{align*}
\nu((-\infty,0])=0
\quad\text{and}\quad
\int_{0}^{\infty}\min\{1,s\}\,\nu(\dd s)
<\infty.
\end{align*}
A L{\'e}vy measure $\nu(\dd s)$ can be interpreted as the rate that $S$ increases by $s>0$. 

A subordinator $S$ is called an $(\alpha/2)$-stable subordinator for $\alpha\in(0,2)$ if it satisfies the following self-similarity or scaling property,
\begin{align}\label{sss}
t^{-2/\alpha}S(t)
=_{\dd}S(1)\quad \text{for all }t>0,
\end{align}
where $=_{\dd}$ denotes equality in distribution. In this case, $S$ is a pure jump process (i.e. zero drift $b=0$) with Laplace exponent $\Phi(\beta)=K\beta^{\alpha/2}$ and L{\'e}vy measure in \eqref{nu0} for some $K>0$. Examples of other subordinators are given in section~\ref{othersubs}.

%%%%%%%%%%%%%%%%%%%%%%%%%%%%%%%%%%%%%%%%%%%%%%%%%%%%%%%%%%%%%%%%%%%%%%
\subsection{Subordinate Brownian motion}

For any dimension $d\ge1$, let ${{B}}=\{{{B}}(s)\}_{s\ge0}$ be a $d$-dimensional Brownian motion with mean-squared displacement \begin{align}\label{msdw}
\E\big[\|{{B}}(s)\|^{2}\big]
=2ds\quad\text{for all $s\ge0$}.
\end{align}
It is well-known that ${{B}}$ satisfies the diffusive scaling property,
\begin{align}\label{brownianscaling}
s^{-1/2}{{B}}(s)=_{\dd}{{B}}(1)\quad\text{for all }s>0.
\end{align}
If $S$ is an independent subordinator with Laplace exponent ${{\Phi}}$, then the L{\'e}vy process $X=\{X(t)\}_{t\ge0}$ defined by
\begin{align}\label{tc}
X(t)
:={{B}}(S(t))+X(0),\quad t\ge0,
\end{align}
is called a subordinate Brownian motion \cite{kim2012}. That is, $X$ is a random time change of Brownian motion. We assume that the possibly random initial condition $X(0)\in\R^{d}$ is independent of ${{B}}$ and $S$. The L{\'e}vy exponent of $X$ is $\Phi(|\xi|^{2})$, meaning
\begin{align*}
\E\big[e^{i\xi\cdot(X(t)-X(0))}\big]
=e^{-t\Phi(|\xi|^{2})},\quad\xi\in\R^{d},\,t\ge0.
\end{align*}
Subordinate Brownian motions are said to be isotropic since their L{\'e}vy exponent depends only on $|\xi|^{2}$. The infinitesimal generator of $X$ can be written as $-\Phi(-\Delta)$, where $\Delta$ is the Laplacian in $\R^{d}$ \cite{kim2012}. It follows immediately from \eqref{msdw}-\eqref{tc} that the mean-squared displacement of $X$ is
\begin{align*}
\E\big[\|X(t)-X(0)\|^{2}\big]
=2d\E[S(t)]\quad\text{for all }t\ge0.
\end{align*}

%%%%%%%%%%%%%%%%%%%%%%%%%%%%%%%%%%%%%%%%%%%%%%%%%%%%%%%%%%%%%%%%%%%%%%
\subsection{L{\'e}vy flights}

If $S$ is an $(\alpha/2)$-stable subordinator with $\alpha\in(0,2)$ as in \eqref{nu0}, then we call the corresponding subordinate Brownian motion ${{X}}=\{{{X}}(t)\}_{t\ge0}$ in \eqref{tc} a L{\'e}vy flight \cite{dubkov2008}. It follows immediately from \eqref{sss}-\eqref{brownianscaling} that a L{\'e}vy flight ${{X}}$ satisfies the superdiffusive scaling property,
\begin{align}\label{scaling}
t^{-1/\alpha}{{X}}(t)
=_{\dd}{{X}}(1)\quad \text{for all }t>0.
\end{align}
L{\'e}vy flights arise as a scaling limit of a random walk with heavy-tailed, power law jumps \cite{metzler2004}. The probability density function for the position of the L{\'e}vy flight satisfies the space fractional Fokker-Planck equation in \eqref{ffpe0} \cite{meerschaert2019}. 

%%%%%%%%%%%%%%%%%%%%%%%%%%%%%%%%%%%%%%%%%%%%%%%%%%%%%%%%%%%%%%%%%%%%%%
\subsection{First hitting times (FHTs)}

Let $\tau>0$ denote the FHT of the subordinate Brownian motion $X$ in \eqref{tc} to some target set $U\subset\R^{d}$,
\begin{align}\label{tau1}
\tau
:=\inf\{t>0:X(t)\in U\},
\end{align}
and let $\sigma>0$ denote the FHT of the Brownian motion ${{B}}$ to $U$,
\begin{align}\label{sigma}
\sigma
:=\inf\{s>0:{{B}}(s)\in U\}.
\end{align}
We are not interested in the behavior of $X$ after time $\tau$, and thus it is enough to consider the so-called stopped subordinate Brownian motion,
\begin{align}\label{stopsub}
X(\min\{\tau,t\})={{B}}(S(\min\{\tau,t\})).
\end{align}
In \eqref{stopsub}, we first subordinate Brownian motion and then stop the process when it hits the target. Reversing the order of these two operations gives the so-called subordinate stopped Brownian motion, 
\begin{align}\label{substop}
\widetilde{X}(t)
:={{B}}(\min\{\sigma,S(t)\})\quad t\ge0.
\end{align}
The FHT of \eqref{substop} to $U$ is,
\begin{align}\label{taut}
\widetilde{\tau}
:=\inf\{t>0:\widetilde{X}(t)\in U\}
=\inf\{t>0:S(t)>\sigma\}.
\end{align}
While we are primarily interested in $\tau$ in \eqref{tau1} rather than $\widetilde{\tau}$ in \eqref{taut}, the fact that $\widetilde{\tau}\le\tau$ almost surely plays an important role in studying $\tau$. 

%%%%%%%%%%%%%%%%%%%%%%%%%%%%%%%%%%%%%%%%%%%%%%%%%%%%%%%%%%%%%%%%%%%%%%%%%%%%%%%%%%%%%%%%%%%%%%%%%%%%%%%%%%%%%%%%%%%%%%%%%%%%%%%%%%%%%%%%%%%%%%%%%%%%%%%%%%%%%%%%%%%%%%%%%%%%%%%%%%%%%%%%%%%%%%%%%%%%%%%%%%%%%%%%%%%%%%%%%%%%%%%%%%%%%%%%%%%%%%%%%%%%%
\section{General analysis}\label{math}

In this section, we present our general analysis and results on subordinate Brownian motions. We begin with two propositions.

%%%%%%%%%%%%%%%%%%%%%%%%%%%%%%%%%%%%%%%%%%%%%%%%%%%%%%%%%%%%%%%%%%%%%%%%%%%%%%%%%%%%%%%%%%%%%%%%
\subsection{Two useful propositions}

The first proposition computes the generator of a subordinator in a case that is useful for our analysis. 

%%%%%%%%%%%%%%%%%%%%%%%%%%%%%%%%%%%%%%%%%%%%%%%%%%%%%%%%%%%%%%%%%%%%%%%%%%%%%%%%%%%%%%%%%%%%%%%%
\begin{proposition}\label{ias2}
Assume $F:[0,\infty)\to[0,1]$ is Lipschitz continuous and satisfies \begin{align}\label{fv}
F(0)=0\quad\text{and}\quad
F'(0)
:=\lim_{s\to0+}\frac{F(s)}{s}\in[0,\infty).
\end{align}
If $S=\{S(t)\}_{t\ge0}$ is a subordinator with drift $b\ge0$ and L{\'e}vy measure $\nu$, then 
\begin{align}\label{fc}
\lim_{t\to0+}\frac{\E[F(S(t))]}{t}
=\rho
:=bF'(0)+\int_{0}^{\infty}F(s)\,\nu(\dd s)
<\infty.
\end{align}
\end{proposition}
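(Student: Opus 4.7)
The plan is to use the Lévy-Itô decomposition of the subordinator into drift and jump parts, split the jumps at a cutoff $\varepsilon>0$ into a compound Poisson process (large jumps) and a subordinator of small jumps, and then read off the short-time asymptotics by conditioning on the number of large jumps before sending $\varepsilon\to 0+$.

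First I would verify finiteness of $\rho$. Since $F$ is Lipschitz with some constant $L$ and $F(0)=0$, one has $F(s)\le Ls$, while $F(s)\le 1$ by hypothesis; hence $F(s)\le (L\vee 1)\min\{s,1\}$, and the defining property $\int_0^\infty \min\{1,s\}\,\nu(\dd s)<\infty$ of a subordinator's L{\'e}vy measure forces $\int_0^\infty F(s)\,\nu(\dd s)<\infty$, giving $\rho<\infty$.

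Next I would decompose
\begin{equation*}
S(t)=bt+J^{s}_{\varepsilon}(t)+J^{l}_{\varepsilon}(t),
\end{equation*}
where $J^{s}_{\varepsilon}$ and $J^{l}_{\varepsilon}$ are independent pure-jump subordinators with L{\'e}vy measures $\nu|_{(0,\varepsilon]}$ and $\nu|_{(\varepsilon,\infty)}$, respectively. The large-jump part $J^{l}_{\varepsilon}$ is compound Poisson with rate $\lambda_{\varepsilon}:=\nu((\varepsilon,\infty))<\infty$ and jump law $\mu_{\varepsilon}:=\nu|_{(\varepsilon,\infty)}/\lambda_{\varepsilon}$. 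Conditioning on $N_{\varepsilon}(t)\sim\text{Poisson}(\lambda_{\varepsilon}t)$, the terms with $N_{\varepsilon}(t)\ge 2$ contribute $O(t^{2})$ (using $F\le 1$), so
\begin{equation*}
\E[F(S(t))]=e^{-\lambda_{\varepsilon}t}\E\bigl[F(bt+J^{s}_{\varepsilon}(t))\bigr]+\lambda_{\varepsilon}t\,e^{-\lambda_{\varepsilon}t}\E\bigl[F(bt+J^{s}_{\varepsilon}(t)+Y)\bigr]+O(t^{2}),
\end{equation*}
with $Y\sim\mu_{\varepsilon}$ independent. The Lipschitz bound $|F(x+J^{s}_{\varepsilon}(t))-F(x)|\le L\,J^{s}_{\varepsilon}(t)$ together with $\E[J^{s}_{\varepsilon}(t)]=t\int_{0}^{\varepsilon}s\,\nu(\dd s)$ lets me replace $J^{s}_{\varepsilon}(t)$ by $0$ at cost $O\bigl(t\int_{0}^{\varepsilon}s\,\nu(\dd s)\bigr)$. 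The first term then becomes $F(bt)/t\to bF'(0)$ (using \eqref{fv}), while the second becomes $\lambda_{\varepsilon}\,\E[F(Y)]=\int_{\varepsilon}^{\infty}F(s)\,\nu(\dd s)$, yielding
\begin{equation*}
\Bigl|\tfrac{\E[F(S(t))]}{t}-\Bigl(bF'(0)+\int_{\varepsilon}^{\infty}F(s)\,\nu(\dd s)\Bigr)\Bigr|\le C\!\int_{0}^{\varepsilon}\!s\,\nu(\dd s)+o_{t\to 0}(1).
\end{equation*}

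Finally, sending $\varepsilon\to 0+$ uses that $\int_{0}^{\varepsilon}s\,\nu(\dd s)\to 0$ (since $\int_{0}^{1}s\,\nu(\dd s)<\infty$) and $\int_{\varepsilon}^{\infty}F(s)\,\nu(\dd s)\to\int_{0}^{\infty}F(s)\,\nu(\dd s)$ (monotone convergence, since $F\ge 0$), giving the claim. The main subtlety I anticipate is the $k=0$ term when $b>0$: one must use the full strength of the assumption that $F'(0)$ exists (not merely that $F$ is Lipschitz) so that $F(bt)/t$ has a genuine limit rather than just being bounded; for $b=0$ this term vanishes identically and the assumption is only needed via the small-jump estimate. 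Everything else is bookkeeping of the compound Poisson structure.
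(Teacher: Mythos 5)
Your proposal is correct and follows essentially the same route as the paper: split $S$ into drift, large jumps (compound Poisson), and small jumps via the L\'evy--It\^o decomposition, condition on the number of large jumps before time $t$, and control the small-jump contribution with the Lipschitz constant of $F$ and the first-moment bound $\E[J^{s}_{\varepsilon}(t)]=t\int_{0}^{\varepsilon}s\,\nu(\dd s)$, then send $\varepsilon\to0+$. The only difference is organizational --- the paper isolates the compound-Poisson-plus-drift case as a separate lemma and bounds the small-jump mean through a dyadic partition (with a harmless extra factor of $2$), whereas you inline the Poisson conditioning and use the exact Campbell-formula identity for that mean.
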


Proposition~\ref{ias2} is useful for finding the short-time distribution of functionals of subordinated processes. The next proposition shows how the short-time distribution of a single FHT yields the asymptotic behavior of extreme FHTs.

Before stating the proposition, we recall a few definitions. A random variable $T$ has an exponential distribution with rate $\lambda>0$ if $\P(T\le t)=1-e^{-\lambda t}$ for $t\ge0$. If $\{T_{i}\}_{i=1}^{k}$ are $k\ge1$ iid exponential random variables with rate $\lambda>0$, then their sum has an Erlang distribution with rate $\lambda>0$ and shape $k\in\{1,2,3,\dots\}$, which means
\begin{align*}
\P\Big(\sum_{i=1}^{k}T_{i}\le t\Big)
=1-\frac{\Gamma(k,\lambda t)}{\Gamma(k)},\quad t\ge0,
\end{align*}
where $\Gamma(a,z):=\int_{z}^{\infty}u^{a-1}e^{-u}\,\dd u$ is the upper incomplete gamma function. A sequence of random variables $\{Z_{N}\}_{N\ge1}$ converges in distribution to  $Z$ as $N\to\infty$ if 
\begin{align*}
\P(Z_{N}\le z)
\to\P(Z\le z)\quad\text{as }N\to\infty,
\end{align*}
for all points $z\in\R$ such that $F(z):=\P(Z\le z)$ is continuous. If $\{Z_{N}\}_{N\ge1}$ converges in distribution to an Erlang random variable with rate $\lambda$ and shape $k$, then we write $Z_{N}\to_{\dd}\textup{Erlang}(\lambda,k)$, and if $k=1$, then we write $Z_{N}\to_{\dd}\textup{Exponential}(\lambda)$.

\begin{proposition}\label{maink}
Let $\{\tau_{n}\}_{n\ge1}$ be an iid sequence of random variables with
\begin{align}\label{shortk}
\P(\tau_{n}\le t)
&\sim{{\rho}} t\quad\text{as }t\to0+,
\end{align}
for some rate $\rho>0$. Let $T_{k,N}$ be the $k$th order statistic,
\begin{align}\label{tkn}
T_{k,N}
:=\min\big\{\{\tau_{1},\dots,\tau_{N}\}\backslash\cup_{j=1}^{k-1}\{T_{j,N}\}\big\},\quad k\in\{1,\dots,N\},
\end{align}
where $T_{1,N}:=\min\{\tau_{1},\dots,\tau_{N}\}$. The following rescaling of $T_{k,N}$ converges in distribution to an Erlang random variable with unit rate and shape $k$,
\begin{align*}
(\rho N)T_{k,N}
\to_{\dd}
\textup{Erlang}(1,k)\quad\text{as }N\to\infty.
\end{align*}
If we assume further that $\E[T_{1,N}]<\infty$ for some $N\ge1$, then 
\begin{align*}
\E[(T_{k,N})^{m}]
&\sim
\frac{\Gamma(k+m)}{\Gamma(k)}\frac{1}{(\rho N)^{m}}
\quad\text{ for each moment $m\in(0,\infty)$ as $N\to\infty$}.
\end{align*}
\end{proposition}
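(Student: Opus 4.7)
The first step is the standard order-statistics identity
\begin{align*}
\P(T_{k,N}>t)
=\sum_{j=0}^{k-1}\binom{N}{j}F(t)^{j}(1-F(t))^{N-j},
\end{align*}
where $F(t):=\P(\tau_{n}\le t)$, obtained from the fact that $\{T_{k,N}>t\}$ is exactly the event that at most $k-1$ of the $\tau_{n}$ are $\le t$. For the distributional convergence, I would fix $z\ge0$, substitute $t=z/(\rho N)$, and use the hypothesis $F(t)\sim\rho t$ to conclude $NF(z/(\rho N))\to z$. Straightforward asymptotics then give $\binom{N}{j}F(z/(\rho N))^{j}\to z^{j}/j!$ and $(1-F(z/(\rho N)))^{N-j}\to e^{-z}$, so the survival function converges to $\sum_{j=0}^{k-1}z^{j}e^{-z}/j!=\Gamma(k,z)/\Gamma(k)$, the Erlang$(1,k)$ survival function, yielding $(\rho N)T_{k,N}\to_{\dd}\textup{Erlang}(1,k)$.

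For moment convergence, I would combine this distributional convergence with uniform integrability. It suffices to establish that for every $p>0$ there exists $C_{p,k}$ with $\E[((\rho N)T_{k,N})^{p}]\le C_{p,k}$ for all sufficiently large $N$, since this supplies uniform integrability of $((\rho N)T_{k,N})^{m}$ for every $m<p$ and hence the desired $\E[((\rho N)T_{k,N})^{m}]\to\Gamma(k+m)/\Gamma(k)$.

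To prove the uniform moment bound, I would first treat $k=1$. Writing $\E[T_{1,N}^{p}]=p\int_{0}^{\infty}t^{p-1}(1-F(t))^{N}\,\dd t$, I would pick $\delta>0$ so small that $F(t)\ge\rho t/2$ on $[0,\delta]$. On $[0,\delta]$ the integrand is bounded by $t^{p-1}e^{-\rho Nt/2}$, and a change of variable gives that this portion of the integral is at most $(2/(\rho N))^{p}\Gamma(p)$, contributing $O(1)$ after multiplication by $(\rho N)^{p}$. For the $[\delta,\infty)$ tail, the hypothesis $\E[T_{1,N_{0}}]<\infty$ together with Markov gives $(1-F(t))^{N_{0}}=\P(T_{1,N_{0}}>t)\le C/t$, whence $(1-F(t))^{N_{0}r}\le(C/t)^{r}$ for any integer $r$. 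Factoring $(1-F(t))^{N}=(1-F(t))^{N_{0}r}(1-F(t))^{N-N_{0}r}$ and bounding the second factor on $[\delta,\infty)$ by $(1-F(\delta))^{N-N_{0}r}$, which is exponentially small in $N$, the tail contribution is at most a constant (for any fixed $r>p$) times $(1-F(\delta))^{N-N_{0}r}$, which remains negligible after scaling by $(\rho N)^{p}$. For general $k$, a block argument reduces to the $k=1$ case: partition $\{1,\dots,N\}$ into $k$ disjoint blocks of size $\lfloor N/k\rfloor$, let $M_{i}$ be the minimum of block $i$, and observe that $T_{k,N}\le\max_{i}M_{i}$, since $M_{1},\dots,M_{k}$ are $k$ of the $\tau_{n}$ all bounded by $\max_{i}M_{i}$. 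Because $M_{i}=_{\dd}T_{1,\lfloor N/k\rfloor}$, the $k=1$ bound applied to each $M_{i}$ together with a union bound gives $\E[(\max_{i}M_{i})^{p}]=O(1/N^{p})$, completing the argument.

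The distributional statement reduces to a routine Poisson limit calculation, so the heart of the proof is the uniform moment bound. The delicate point is that only the first moment of $T_{1,N_{0}}$ is assumed finite; the resulting Markov bound $\P(T_{1,N_{0}}>t)\le C/t$ is merely a power law and by itself cannot control higher moments. It is the combination with the nondegeneracy $F(\delta)>0$---which upgrades the tail estimate to exponential decay in $N$---that ultimately furnishes uniform control of every polynomial moment.
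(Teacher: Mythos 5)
Your proof is correct. Note, however, that the paper does not actually prove Proposition~\ref{maink}---it merely cites it as a special case of Theorems 5 and 6 in Madrid and Lawley's companion paper, which establishes the analogous result under the more general hypothesis $\P(\tau\le t)\sim\rho t^{q}$. You have thus supplied a self-contained argument where the paper defers to a reference.

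Your route is the standard one for the distributional claim: the order-statistics identity turns the survival function of $T_{k,N}$ into a binomial tail, and the Poisson limit (via $NF(z/(\rho N))\to z$) yields the Erlang survival function. The heart of your argument---and the part worth commenting on---is the uniform moment bound. You correctly identify the two regimes: on $[0,\delta]$ you use $F(t)\ge\rho t/2$ to get the Gamma-integral bound $\int_{0}^{\delta}t^{p-1}e^{-\rho Nt/2}\,\dd t=O(N^{-p})$, and on $[\delta,\infty)$ you splice a Markov bound $(1-F(t))^{N_{0}}\le C/t$ raised to a power $r>p$ (to make the integral converge) against the geometric factor $(1-F(\delta))^{N-N_{0}r}$, which annihilates any polynomial in $N$. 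This is precisely the correct way to exploit a mere first-moment hypothesis to bound all higher moments, and your closing remark pinpoints exactly why both ingredients are needed. Your block reduction for $k>1$ ($T_{k,N}\le\max_{i}M_{i}$ with $M_{i}=_{\dd}T_{1,\lfloor N/k\rfloor}$) is an elegant way to avoid redoing the integral estimate directly with the $k$-term binomial sum; the alternative (bounding $\sum_{j=0}^{k-1}\binom{N}{j}F(t)^{j}(1-F(t))^{N-j}$ directly) works but is messier. One small observation: for the uniform-integrability step you only need the $p$th-moment bound for a single $p>m$ rather than all $p$, which is what you in fact use, so the argument is efficiently organized. Overall this is a complete and correct proof taking a somewhat different path than the cited reference's more general $t^{q}$ setting, but in the spirit one would expect for such extreme-value results.
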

%%%%%%%%%%%%%%%

Proposition~\ref{maink} is a special case of Theorems 5 and 6 in \cite{madrid2020comp} which were proven for the case $\P(\tau\le t)\sim \rho t^{q}$ as $t\to0$ for some $\rho>0$ and $q>0$.

%%%%%%%%%%%%%%%%%%%%%%%%%%%%%%%%%%%%%%%%%%%%%%%%%%%%%%%%
\subsection{Subordinated processes}\label{gs}

Before considering subordinate Brownian motion, we first analyze subordinate processes when the ``parent'' process is not necessarily Brownian. Let $S=\{S(t)\}_{t\ge0}$ be a subordinator with drift $b\ge0$ and L{\'e}vy measure $\nu$ as in section~\ref{sub}. Let $Y=\{Y(s)\}_{s\ge0}$ be a stochastic process independent of $S$. Define the FHT to a set $U$ in the state space of $Y$,
\begin{align*}
\sigma
:=\inf\{s>0:Y(s)\in U\}.
\end{align*}
Define the two subordinations of the ``parent'' process $Y$,
\begin{align*}
X(t)
&:=Y(S(t)),\quad
\widetilde{X}(t)
:=Y(\min\{\sigma,S(t)\}),\quad t\ge0.
\end{align*}
Define the FHTs of $X$ and $\widetilde{X}$ to $U$,
\begin{align*}
\ot
&:=\inf\{t>0:X(t)\in U\},\quad
\t
:=\inf\{t>0:\widetilde{X}(t)\in U\}
=\inf\{t>0:S(t)>\sigma\}.
\end{align*}

Since $Y$ and $S$ are independent, conditioning on the value of $S(t)$ gives
\begin{align*}
\P(\t\le t)
=\P(\sigma\le S(t))
=\E[\widetilde{F}(S(t))],\quad t\ge0,
\end{align*}
where $\widetilde{F}(s):=\P(\sigma\le s)$. Therefore, if $\widetilde{F}(s)$ is merely Lipschitz and satisfies \eqref{fv}, then Proposition~\ref{ias2} yields the short-time behavior of the distribution of $\t$,
\begin{align}\label{abv}
\lim_{t\to0+}\frac{\P(\t\le t)}{t}
=\tilde{\rho}
:=b\widetilde{F}'(0)+\int_{0}^{\infty}\widetilde{F}(s)\,\nu(\dd s)
<\infty.
\end{align}
Furthermore, if $\tilde{\rho}>0$ and $\widetilde{T}_{k,N}$ is the $k$th fastest FHT of $N$ iid realizations of $\t$ (see \eqref{tkn}), then Proposition~\ref{maink} yields the large $N$ distribution of $\widetilde{T}_{k,N}$ in terms of an Erlang random variable. Furthermore, if $\E[\widetilde{T}_{1,N}]<\infty$ for some $N\ge1$, then Proposition~\ref{maink} also yields the large $N$ behavior of the $m$th moment of $\widetilde{T}_{k,N}$.

Next, notice that we have the following bounds on the distribution of the FHT $\tau$,
\begin{align}\label{gb}
\P(X(t)\in U)
\le\P(\ot\le t)
\le\P(\t\le t)\quad\text{for all }t\ge0,
\end{align}
since $\t\le\ot$ almost surely and $X(t)\in U$ implies $\ot\le t$. Since $Y$ and $S$ are independent, we again condition on the value of $S(t)$ to obtain
\begin{align*}
\P(X(t)\in U)
=\E[F(S(t))],\quad t\ge0,
\end{align*}
where $F(s):=\P(Y(s)\in U)$. Therefore, if $F(s)$ is Lipschitz and satisfies \eqref{fv}, then Proposition~\ref{ias2} yields
\begin{align}\label{abv2}
\lim_{t\to0+}\frac{\P(X(t)\in U)}{t}
=\rho
:=bF'(0)+\int_{0}^{\infty}F(s)\,\nu(\dd s)
<\infty.
\end{align}
Therefore, the bounds in \eqref{gb} and the limits in \eqref{abv} and \eqref{abv2} yield the following bounds on the short-time behavior of the distribution of $\tau$,
\begin{align*}
\rho t+o(t)
\le\P(\tau\le t)
\le\tilde{\rho}t+o(t)\quad\text{as }t\to0+,
\end{align*}
where $f(t)=o(t)$ means $f(t)/t\to0$. 
If $T_{k,N}$ is the $k$th fastest FHT of $N$ iid realizations of $\tau$ (see \eqref{tkn}), $\rho\tilde{\rho}>0$, and $\E[T_{1,N}]<\infty$ for some $N\ge1$, then it follows from Proposition~\ref{maink} that we can bound the decay of the $m$th moment of $T_{k,N}$ as $N\to\infty$,
\begin{align*}
\frac{\Gamma(k+m)}{\Gamma(k)}\frac{1}{(\tilde{\rho} N)^{m}}+o(N^{-m})
\le\E[(T_{k,N})^{m}]
\le\frac{\Gamma(k+m)}{\Gamma(k)}\frac{1}{(\rho N)^{m}}+o(N^{-m}).
\end{align*}

Summarizing, if $X$ is defined by subordinating some process $Y$, then Proposition~\ref{ias2} yields information about the short-time distribution of $X$ and FHTs of $X$. Then, Proposition~\ref{maink} translates this short-time distribution of a single FHT into the behavior of extreme FHTs. Importantly, these conclusions require only mild assumptions on the parent process $Y$. In the next subsection, we consider the case that the parent process is a Brownian motion.

%%%%%%%%%%%%%%%%%%%%%%%%%%%%%%%%%%%%%%%%%%%%%%%%%%%%%%%%
\subsection{Subordinate Brownian motion}\label{sb}

Let $S=\{S(t)\}_{t\ge0}$ be a subordinator as in section~\ref{sub} and assume that $S$ has nontrivial L{\'e}vy measure,
\begin{align}\label{nontrivial}
\nu((0,\infty))>0,
\end{align}
to exclude the trivial case in which $S$ is the deterministic function $S(t)=bt$ for all $t\ge0$. Let ${{B}}=\{{{B}}(s)\}_{s\ge0}$ be an independent, $d$-dimensional Brownian motion for any $d\ge1$ as in \eqref{msdw}. Define $X=\{X(t)\}_{t\ge0}$ as the random time change of ${{B}}$,
\begin{align}\label{tc3}
X(t)
:={{B}}(S(t))+X(0),\quad t\ge0,
\end{align}
where $X(0)\in\R^{d}$ is a possibly random initial position independent of $S$ and ${{B}}$. 

Let $\tau$ be the FHT of $X$ to some target set $U\subset\R^{d}$ (see \eqref{tau1}). Assume $U$ is nonempty and is the closure of its interior, which precludes trivial cases such as the target having zero Lebesgue measure. Assume that the distribution of $X(0)$ is a probability measure with compact support $U_{0}\subset\R^{d}$ that does not intersect the target,
\begin{align}\label{away}
U_{0}\cap U
=\varnothing.
\end{align}
Note that $U_{0}$ and $U$ are both closed sets, and thus \eqref{away} ensures that $U_{0}$ and $U$ are separated by a strictly positive distance. As two examples, the initial distribution could be a Dirac mass at a point $X(0) = x_{0} = U_{0}\in\R^{d}$ if $x_{0}\notin U$ or it could be uniform on a set $U_{0}$ satisfying \eqref{away}. 

\begin{theorem}\label{key}
Under the assumptions of section~\ref{sb}, we have that
\begin{align}
\P({\os}\le t)
&\sim \P(X(t)\in U)
\sim \rho t\quad\text{as }t\to0+,\label{tae}\\
\text{where}\qquad
\rho
&:=\int_{0}^{\infty}\P({{B}}(s)+X(0)\in U)\,\nu(\dd s)\in(0,\infty).\label{rhotheorem}
\end{align}
Furthermore, if $T_{N}:=\min\{\tau_{1},\dots,\tau_{N}\}$, where $\{\tau_{n}\}_{n\ge1}$ is an iid sequence of realizations of $\tau$, then
\begin{align}\label{ted}
(\rho N)T_{N}
\to_{\dd}
\textup{Exponential}(1)\quad\text{as }N\to\infty.
\end{align}
More generally, if $T_{k,N}$ is the $k$th fastest FHT in \eqref{tkn}, then
\begin{align}\label{tedk}
(\rho N)T_{k,N}
\to_{\dd}
\textup{Erlang}(1,k)\quad\text{as }N\to\infty.
\end{align}
If $\E[T_{N}]<\infty$ for some $N\ge1$, then
\begin{align}\label{tm}
\E[(T_{k,N})^{m}]
&\sim
\frac{\Gamma(k+m)}{\Gamma(k)}\frac{1}{(\rho N)^{m}}
\quad\text{for each moment $m\in(0,\infty)$ as $N\to\infty$}.
\end{align}
\end{theorem}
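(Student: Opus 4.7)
The plan is to combine Proposition~\ref{ias2} with the strong Markov property of the L\'evy process $X$ to establish the short-time asymptotic $\P(\tau\le t)\sim\rho t$, and then invoke Proposition~\ref{maink} for the extreme-value statements \eqref{ted}--\eqref{tm}.

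First I would apply Proposition~\ref{ias2} to $F(s):=\P({{B}}(s)+X(0)\in U)$. Since $U_{0}\cap U=\varnothing$, the distance $L:=\operatorname{dist}(U_{0},U)>0$, and a Gaussian tail estimate gives $F(s)\le C\exp(-L^{2}/(Cs))$ for small $s$, yielding $F(0)=0$ and $F'(0)=0$. Lipschitz continuity of $F$ follows from uniform bounds on $\partial_{s}F$, since $F$ satisfies the heat equation and the Laplacian of the Gaussian kernel is integrable over $U$. Proposition~\ref{ias2} then gives $\P(X(t)\in U)=\E[F(S(t))]\sim\rho t$ with $\rho=\int_{0}^{\infty}F(s)\,\nu(\dd s)$. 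Finiteness of $\rho$ uses $F(s)\le C\min\{1,s\}$ together with $\int_{0}^{\infty}\min\{1,s\}\,\nu(\dd s)<\infty$, and positivity follows from $F(s)>0$ for all $s>0$ (the Gaussian has full support and $\operatorname{int}(U)\ne\varnothing$) combined with $\nu((0,\infty))>0$.

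The lower bound $\P(\tau\le t)\ge\P(X(t)\in U)$ is trivial, and the main obstacle is the matching upper bound $\limsup_{t\to 0+}\P(\tau\le t)/t\le\rho$. The bound $\P(\tau\le t)\le\P(\widetilde{\tau}\le t)\sim\widetilde{\rho}\,t$ coming from section~\ref{gs} is in general strictly loose ($\widetilde{\rho}>\rho$), since it overcounts \emph{leapovers} where the underlying Brownian motion enters and exits $U$ during a single subordinator jump. To sharpen it I would use the strong Markov property of $X$ at $\tau$: for any $\delta\in(0,1)$,
\begin{align*}
\P\bigl(X((1+\delta)t)\in U\bigr)\ge\E\bigl[\mathbf{1}_{\{\tau\le t\}}\,h\bigl(X(\tau),(1+\delta)t-\tau\bigr)\bigr],
\end{align*}
where $h(y,r):=\P_{y}(X(r)\in U)$. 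The technical heart is to show $\E[\mathbf{1}_{\{\tau\le t\}}(1-h(X(\tau),r))]=o(t)$. Splitting on whether $\operatorname{dist}(X(\tau),\partial U)\ge\eta$ or $<\eta$: on the former event $1-h(X(\tau),r)\le C(\eta)r$ by right-continuity of $X$ and standard small-time exit estimates, while on the latter event the absolute continuity of the Gaussian jump distribution of $X$ gives $\P(\operatorname{dist}(X(\tau),\partial U)<\eta\mid \tau\le t)=O(\eta)$; optimizing $\eta=\sqrt{t}$ yields the desired $o(t)$ bound. Combining, $\rho(1+\delta)t+o(t)\ge\P(\tau\le t)(1-o(1))$, and letting $t\to 0$ and then $\delta\to 0$ closes the sandwich.

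Once $\P(\tau\le t)\sim\rho t$ is established with $\rho\in(0,\infty)$, the distributional convergences \eqref{ted} and \eqref{tedk} together with the moment asymptotic \eqref{tm} follow directly from Proposition~\ref{maink} applied to the iid sequence $\{\tau_{n}\}_{n\ge 1}$. The hard part of the whole argument is the upper bound in the previous paragraph: showing that the "leapover" events responsible for the gap between $\rho$ and $\widetilde{\rho}$ contribute only at $o(t)$ order, which requires using the absolute continuity of the Brownian jump distribution to control the position $X(\tau)$ at the first hitting time, while the continuous-entry contribution from a positive drift $b$ is super-exponentially small in $t$ and hence harmless.
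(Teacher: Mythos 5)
Your plan correctly matches the paper's outer structure: apply Proposition~\ref{ias2} to $F(s)=\P(B(s)+X(0)\in U)$ to get $\P(X(t)\in U)\sim\rho t$ (your checks on $F$, on finiteness of $\rho$ via $F(s)\le C\min\{1,s\}$, and on positivity via full support of the Gaussian are all fine), and then invoke Proposition~\ref{maink} for \eqref{ted}--\eqref{tm}. You have also correctly identified that the only hard step is the matching upper bound $\P(\tau\le t)\le\rho t+o(t)$. For that step you take a different route from the paper: you apply the strong Markov property at the random time $\tau$ and argue that $X$ stays in $U$ for a while after $\tau$, whereas the paper decomposes at the \emph{deterministic} time $t$ according to whether $X(t)\in U$, $X(t)\in U^{\delta(t)}\setminus U$, or $X(t)\notin U^{\delta(t)}$, with a shrinking collar $\delta(t)=t^{1/4}$.

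Your route has a genuine gap in the claim $\P(\operatorname{dist}(X(\tau),\partial U)<\eta\mid\tau\le t)=O(\eta)$. First, this needs the Lebesgue measure of the collar $\{x:\operatorname{dist}(x,\partial U)<\eta\}$ to be $O(\eta)$ (or at least to vanish as $\eta\to0$), but the theorem only assumes $U$ is nonempty and the closure of its interior; this permits $|\partial U|>0$, in which case the collar measure tends to $|\partial U|>0$ and not to zero. The paper's decomposition set $U^{\delta(t)}\setminus U$ shrinks to $\varnothing$ (not to $\partial U$) because $U$ is closed, so dominated convergence applies with no regularity hypothesis on $\partial U$. Second, even for smooth $U$, the law of $X(\tau)$ is a first-hitting/overshoot distribution of a jump process, not the Gaussian transition density; such distributions typically concentrate at the boundary (the one-dimensional $\alpha$-stable overshoot density diverges like $\operatorname{dist}(\cdot,\partial U)^{-\alpha/2}$), so ``absolute continuity of the Gaussian jump'' alone does not give a bounded landing density, and the selection bias at the random time $\tau$ is exactly the difficulty. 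The paper sidesteps this by conditioning on $S(t)$ at the deterministic time $t$, where $X(t)-X(0)$ is exactly Gaussian. Finally, your bound $1-h(X(\tau),r)\le C(\eta)r$ is stated from Proposition~\ref{ias2}/Lemma~\ref{mono} at fixed $\eta$; letting $\eta=\eta(t)\to0$ needs uniformity in $\eta$ of the threshold below which the asymptotic kicks in, which that fixed-$\eta$ statement does not directly give.
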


Before applying Theorem~\ref{key} to some examples in section~\ref{examples}, we make several comments. First, the asymptotic equality $\P({\os}\le t)\sim \P(X(t)\in U)$ in \eqref{tae} means that paths which hit the target before a short time $t$ are much more likely to stay in the target than to leave before $t$. While this is intuitive, it does not hold for Brownian motion, except on a logarithmic scale (the assumption in \eqref{nontrivial} means that $X$ cannot be a Brownian motion). Second, \eqref{ted} means that $T_{N}$ is approximately exponentially distributed with rate $\rho N$ is $N$ is large, and similarly $T_{k,N}$ is approximately Erlang distributed with rate $\rho N$ and shape $k$. Third, the asymptotics in \eqref{tae} and \eqref{tm} differ markedly from the case of diffusion. Further, the exponential distribution in \eqref{ted} differs from the typically Gumbel distributed extreme FHTs of diffusion \cite{lawley2020dist}. See the Introduction section for more on how Theorem~\ref{key} differs from the diffusion case. Finally, while \eqref{tae} gives the short-time distributions, these are equivalent to the ``small noise'' distributions in the case of a L{\'e}vy flight. Indeed, if ${{X}}$ is a L{\'e}vy flight with generalized diffusion coefficient ${{K}}$, then \eqref{tae} implies
\begin{align*}
\P({{X}}(t)\in U)
\sim {{K}}t\int_{0}^{\infty}\P({{B}}(s)+X(0)\in U)\frac{\alpha/2}{\Gamma(1-\frac{\alpha}{2})}\frac{1}{s^{1+\frac{\alpha}{2}}}\,\dd s\quad\text{as }{{K}}\to0+.
\end{align*}

%%%%%%%%%%%%%%%%%%%%%%%%%%%%%%%%%%%%%%%%%%%%%%%%%%%%%%%%%%%%%%%%%%%%%%%%%%%%%%%%%%%%%%%%%%%%%%%%%%%%%%%%%%%%%%%%%%%%%%%%%%%%%%%%%%%%%%%%%%%%%%%%%%%%%%%%%%%%%%%%%%%%%%%%%%%%%%%%%%%%%%%%%%%%%%%%%%%%%%%%%%%%%%%%%%%%%%%%%%%%%%%%%%%%%%%%%%%%%%%%%%%%%
\section{Examples and numerical simulation}\label{examples}

We now apply Theorem~\ref{key} for various choices of the space dimension $d\ge1$, the target $U$, and the subordinator $S$. 

%%%%%%%%%%%%%%%%%%%%%%%%%%%%%%%%%%%%%%%%%%%%%%%%%%%%%%%%
\subsection{Half-line}\label{half}

Consider a one-dimensional L{\'e}vy flight ${{X}}$ in $\R$ that starts at ${{X}}(0)=0$ with $\alpha\in(0,2)$. That is, $X$ is defined in \eqref{tc3} and $S$ is an $(\alpha/2)$-stable subordinator defined in section~\ref{sub}. Suppose the target is $U=(-\infty,-L]$ for some $L>0$. Theorem~\ref{key} implies that $\tau$ has the short-time distribution in \eqref{tae} with rate
\begin{align*}
\rho
&={{K}}\int_{0}^{\infty}\P({{B}}(s)\in U)\frac{\alpha/2}{\Gamma(1-\alpha/2)}\frac{1}{s^{1+\alpha/2}}\,\dd s
=\frac{\Gamma(\alpha)\sin(\alpha\pi/2)}{\pi}\frac{{{K}}}{{{L}}^{\alpha}}\in(0,\infty),
\end{align*}
since $\P({{B}}(s)\in U)
=\P({{B}}(s)\le -L)
=\tfrac{1}{2}[1+\text{erf}(-L/\sqrt{4s})]$ for $s>0$. This result for this example was derived formally in \cite{palyulin2019}. Theorem~\ref{key} further implies the convergence in distribution in \eqref{ted}-\eqref{tedk}. In addition, the Sparre-Anderson theorem \cite{koren2007} implies that $\P(\tau>t)=\mathcal{O}(t^{-1/2})$ as $t\to\infty$ which implies
\begin{align*}
\E[T_{N}]
=\int_{0}^{\infty}\P(T_{N}>t)\,\dd t
=\int_{0}^{\infty}(\P(\tau>t))^{N}\,\dd t<\infty\quad\text{ if $N\ge3$}.
\end{align*}
Hence, Theorem~\ref{key} implies $\E[(T_{N})^{m}]\sim\Gamma(m+1)(\rho N)^{-m}$ as $N\to\infty$ for any $m>0$.

These conclusions of Theorem~\ref{key} about the asymptotic behavior of $T_{N}$ as $N\to\infty$ are illustrated in Figure~\ref{fighalf} using stochastic simulations (simulation details are given in section~\ref{simulation} below). In the top left panel, we plot the empirical probability density of $(\rho N)T_{N}$ obtained from stochastic simulations with $\alpha=1.5$. As implied by Theorem~\ref{key}, $(\rho N)T_{N}$ converges in distribution to a unit rate exponential random variable. In the top right panel, we plot the maximum difference between the empirical distribution of $(\rho N)T_{N}$ and a unit rate exponential random variable,
\begin{align}\label{ks}
\sup_{z\ge0}\big|\P((\rho N)T_{N}> z)-\exp(-z)\big|,
\end{align}
as a function of $N$ for different choices of $\alpha$. The difference \eqref{ks} is the Kolmogorov-Smirnov distance. This plot shows that the convergence of $(\rho N)T_{N}$ to an exponential random variable is faster for small $\alpha$. In the bottom two plots, we plot the absolute errors between the simulations and the theory for the mean and standard deviation,
\begin{align}\label{aer}
\big|\E[T_{N}]-(\rho N)^{-1}\big|,\quad
\big|\sqrt{\textup{Variance}[T_{N}]}-(\rho N)^{-1}\big|,
\end{align}
as functions of $N$ for $\alpha=1.5$ (bottom left panel) and $\alpha=1$ (bottom right panel). As implied by Theorem~\ref{key}, these errors decay faster than $N^{-1}$ as $N$ grows.

%%%%%%%%%%%%%%%%%%%%%%%%%%%%%%%%%
\begin{figure}
  \centering
              \includegraphics[width=0.465\textwidth]{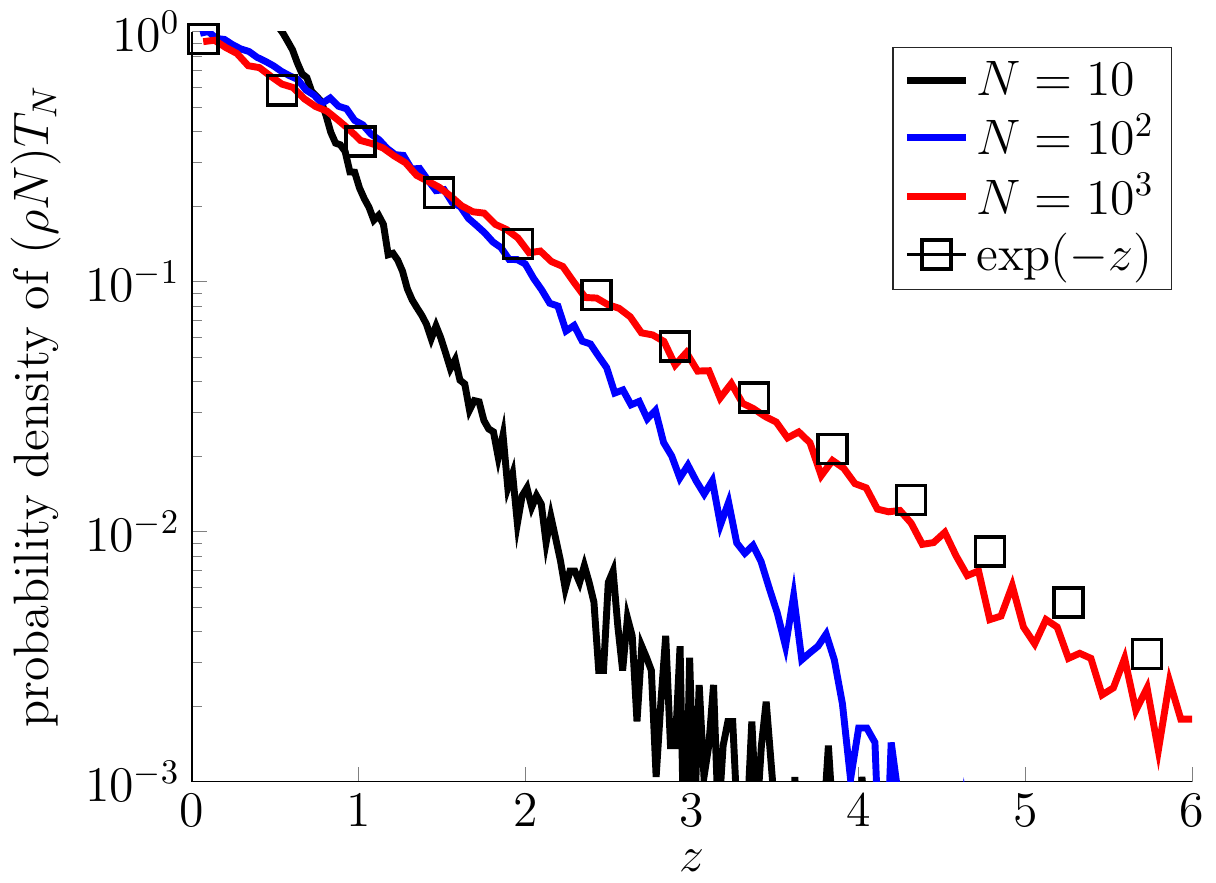}
                \qquad
        \includegraphics[width=0.465\textwidth]{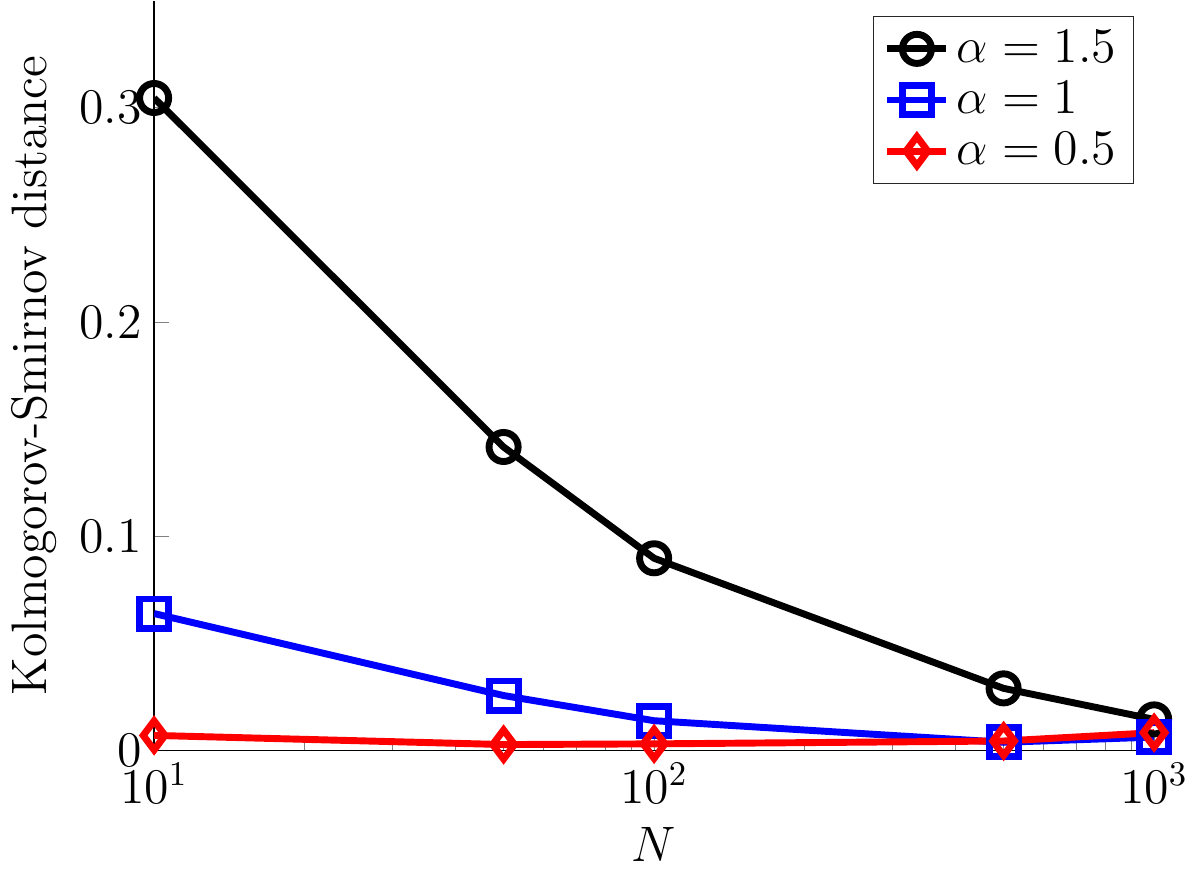}
        \vspace{5pt}
        
    \includegraphics[width=0.465\textwidth]{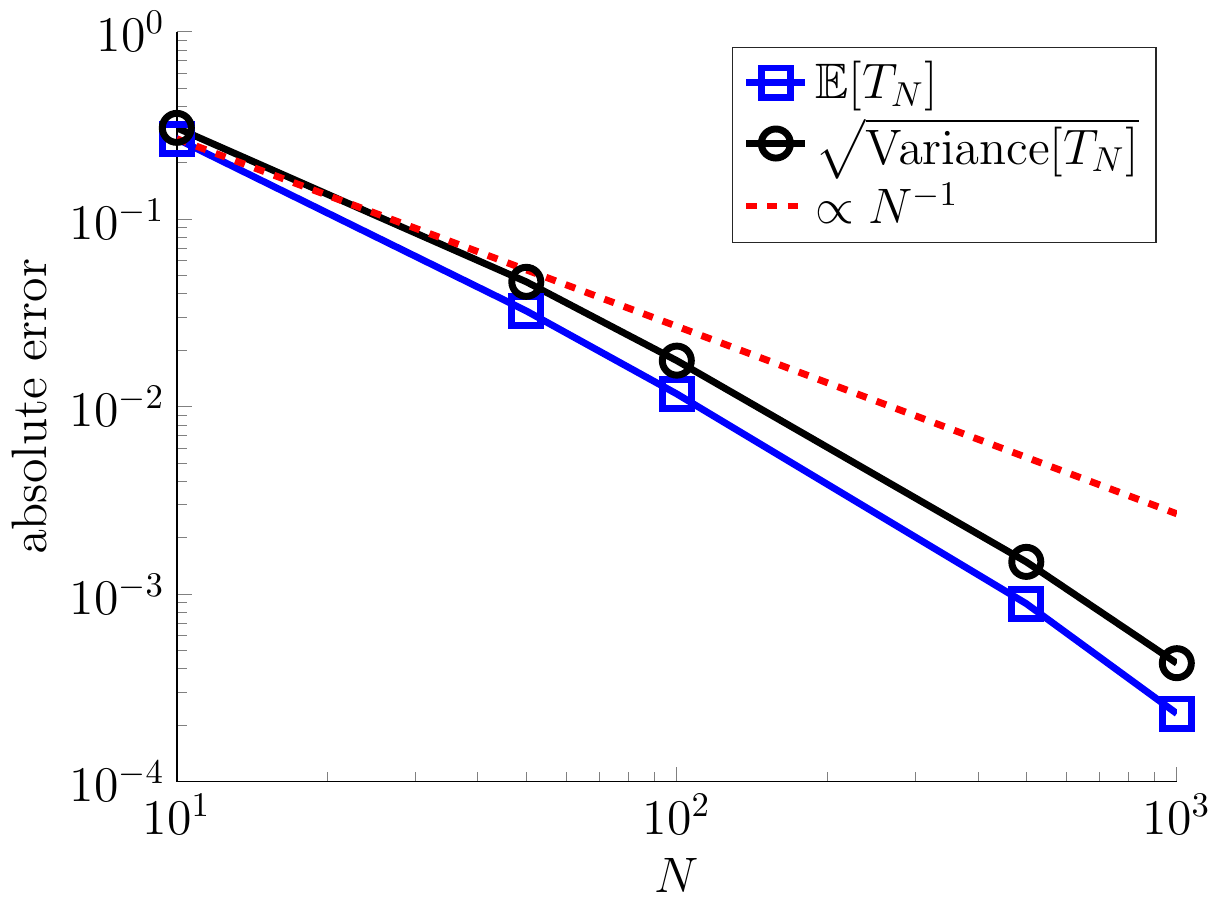}
    \qquad
        \includegraphics[width=0.465\textwidth]{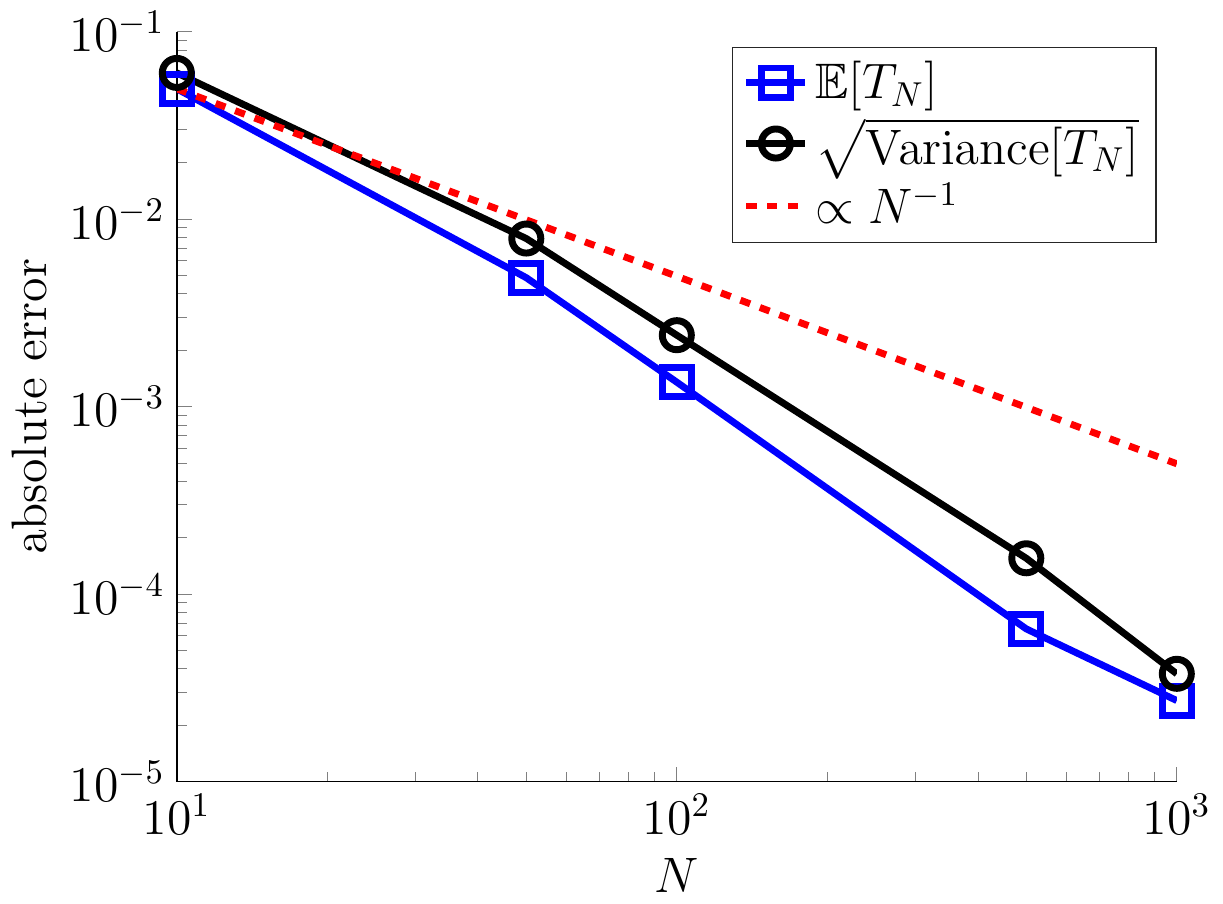}
 \caption{Comparison of Theorem~\ref{key} and empirical results obtained from stochastic simulations for L{\'e}vy flights in the one-dimensional geometry in section~\ref{half}. Top left: Empirical probability density of $(\rho N)T_{N}$ for $\alpha=1.5$. Top right: Kolmogorov-Smirnov distance in \eqref{ks} between the empirical probability density of $(\rho N)T_{N}$ and a unit rate exponential for different choices of $\alpha$. Bottom: Absolute errors for the mean and standard deviation in \eqref{aer} for $\alpha=1.5$ (bottom left) and $\alpha=1$ (bottom right). In all four plots, we take $K=L=1$.}
 \label{fighalf}
\end{figure}
%%%%%%%%%%%%%%%%%%%%%%%%%%%%%%%%%

%%%%%%%%%%%%%%%%%%%%%%%%%%%%%%%%%%%%%%%%%%%%%%%%%%%%%%%%
\subsection{Escape from a $d$-dimensional sphere}\label{escape}

Consider a L{\'e}vy flight ${{X}}$ in $\R^{d}$ with $d\ge1$ starting at ${{X}}(0)=0\in\R^{d}$ with $\alpha\in(0,2)$. Suppose the target is
\begin{align}\label{ballt}
U=\{x\in\R^{d}:\|x\|\ge L\},
\end{align}
so that $\tau$ is the escape time from a $d$-dimensional sphere of radius $L>0$ centered at the origin. Theorem~\ref{key} implies that \eqref{tae} holds with
\begin{align}\label{rhol}
\rho
=\rho(L)
&={{K}}\int_{0}^{\infty}\P(\|{{B}}(s)\|\ge L)\frac{\alpha/2}{\Gamma(1-\alpha/2)}\frac{1}{s^{1+\alpha/2}}\,\dd s
=\frac{2^{\alpha}\Gamma(\frac{d+\alpha}{2})}{\Gamma(\frac{d}{2})\Gamma(1-\frac{\alpha}{2})}\frac{{{K}}}{L^{\alpha}},
\end{align}
since $\P(\|{{B}}(s)\|\ge L)
=\Gamma(\frac{d}{2},\frac{L^{2}}{4s})/\Gamma(\frac{d}{2})$ for $s>0$. Theorem~\ref{key} further implies the convergence in distribution in \eqref{ted}-\eqref{tedk}. Furthermore,
\begin{align*}
\E[T_{N}]
\le\E[\tau]
=\Big[\rho\Gamma\Big(1-\frac{\alpha}{2}\Big)\Gamma\Big(1+\frac{\alpha}{2}\Big)\Big]^{-1}<\infty\quad\text{for any }N\ge1,
\end{align*}
where the formula for $\E[\tau]$ is due to Getoor \cite{getoor1961}. Therefore, Theorem~\ref{key} implies that $\E[(T_{N})^{m}]\sim\Gamma(m+1)(\rho N)^{-m}$ as $N\to\infty$ for any moment $m\in(0,\infty)$.

These results are illustrated in Figure~\ref{figball} for dimension $d=3$. In the top left panel, we plot the empirical probability density of $(\rho N)T_{N}$ obtained from stochastic simulations with $\alpha=1.5$, which shows that $(\rho N)T_{N}$ converges in distribution to a unit rate exponential random variable. The top right panel plots the Kolmogorov-Smirnov distance in \eqref{ks} as a function of $N$ for difference choices of $\alpha$. The bottom two plots show the absolute errors for the mean and standard deviation in \eqref{aer} for $\alpha=1.5$ (bottom left panel) and $\alpha=1$ (bottom right panel). As implied by Theorem~\ref{key}, these errors decay faster than $N^{-1}$ as $N$ grows.

%%%%%%%%%%%%%%%%%%%%%%%%%%%%%%%%%
\begin{figure}
  \centering
              \includegraphics[width=0.465\textwidth]{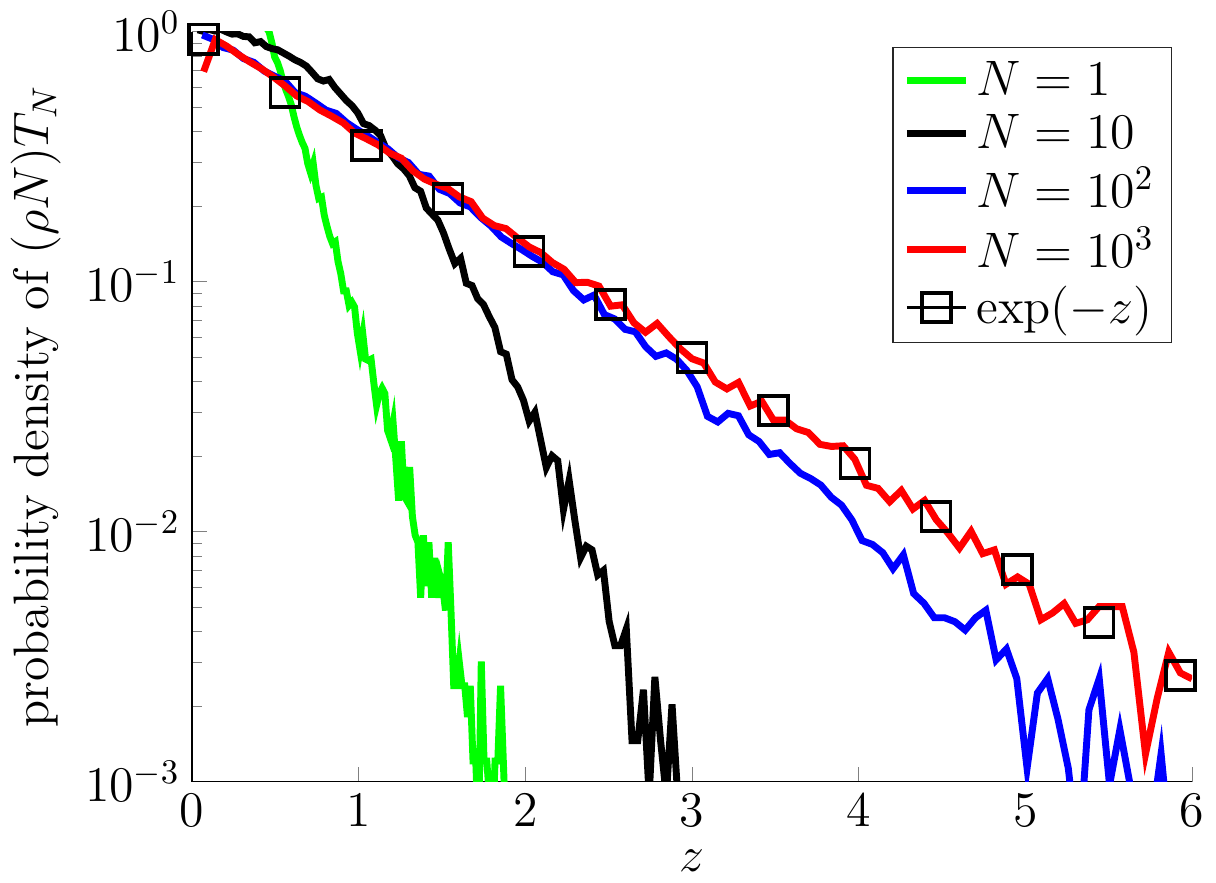}
                \qquad
        \includegraphics[width=0.465\textwidth]{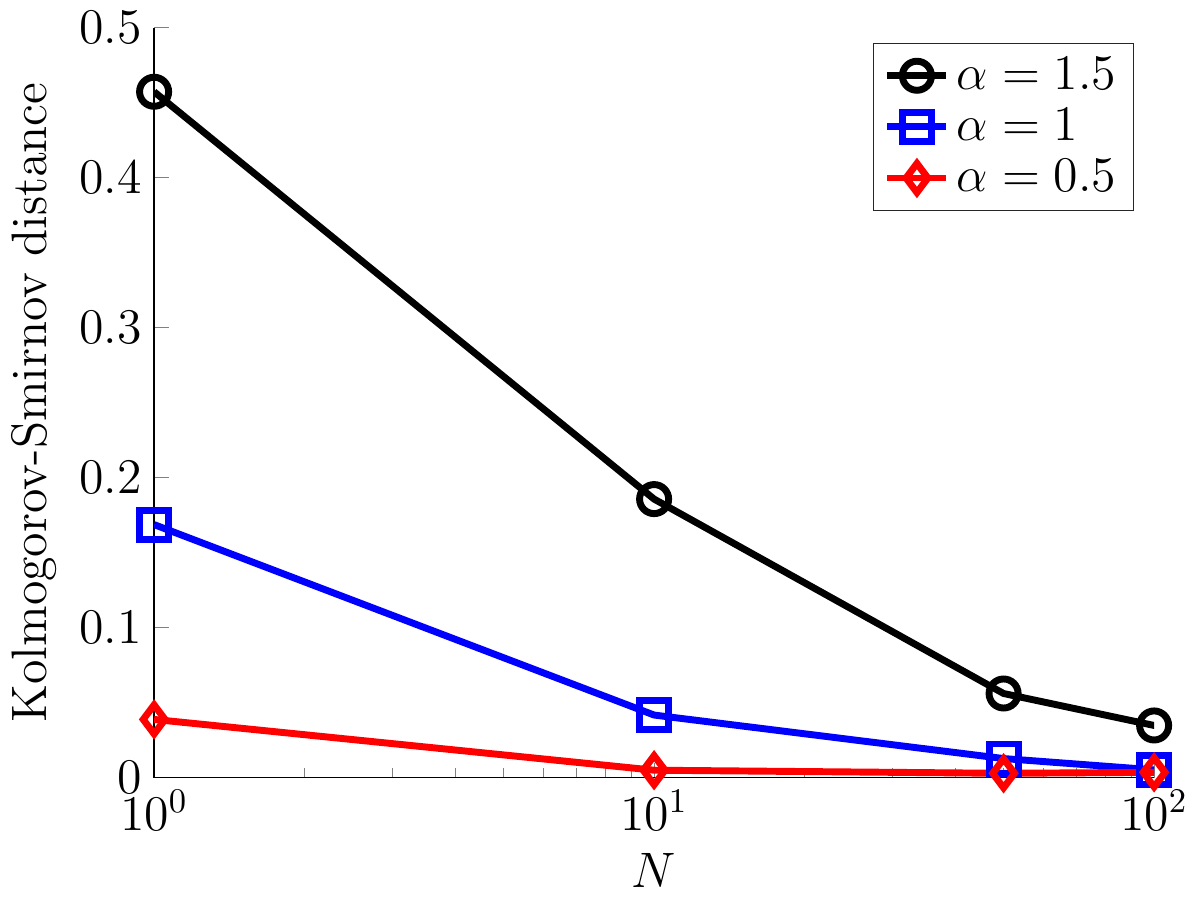}
        \vspace{5pt}
        
            \includegraphics[width=0.465\textwidth]{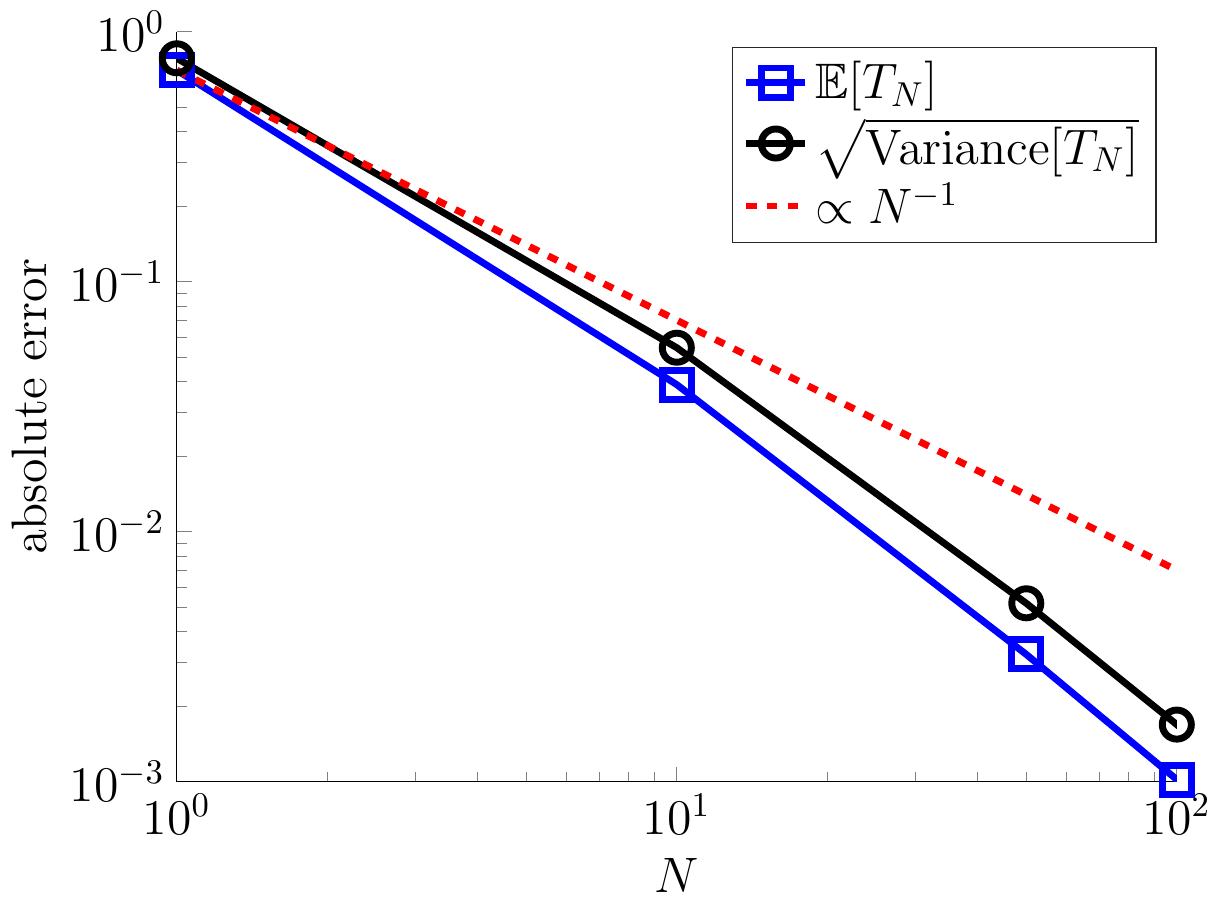}
    \qquad
        \includegraphics[width=0.465\textwidth]{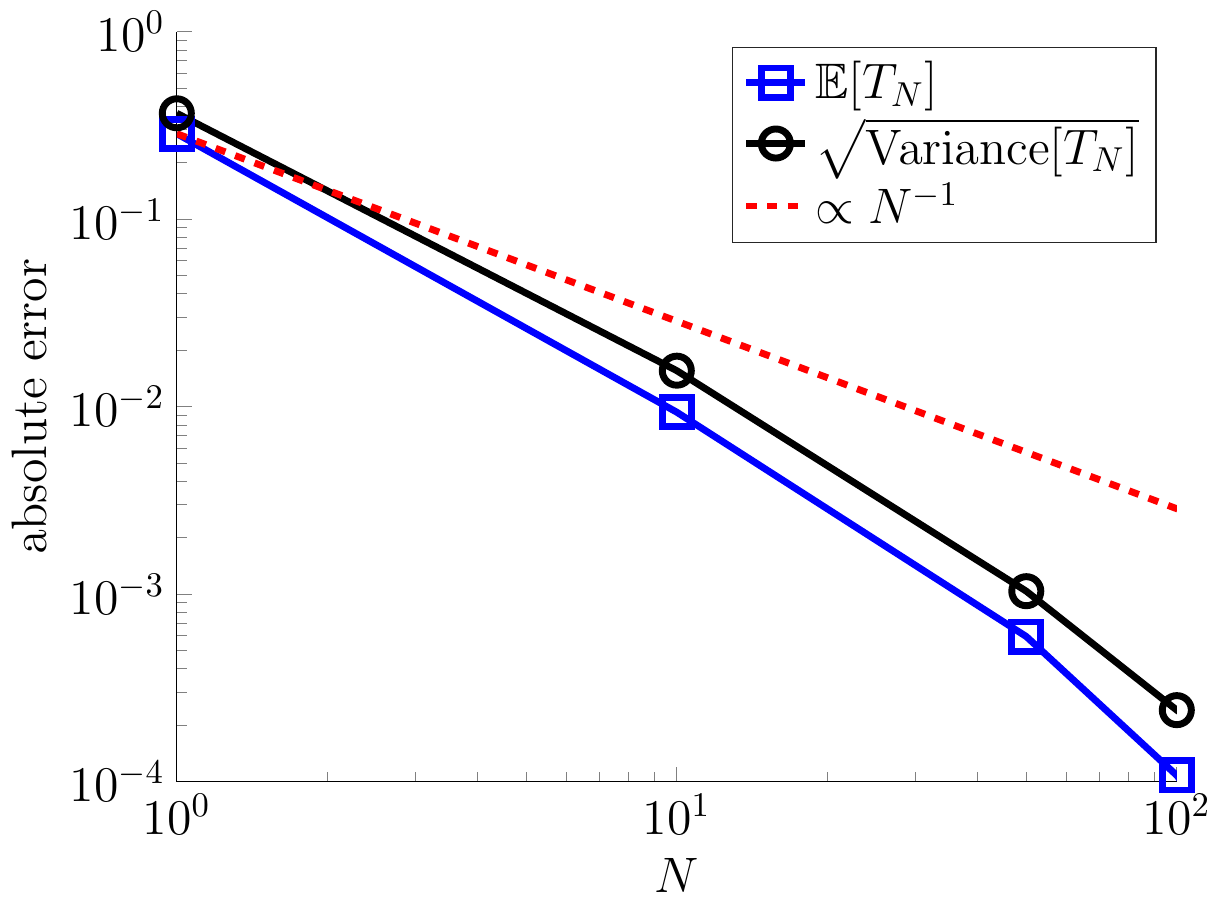}
 \caption{Comparison of Theorem~\ref{key} and empirical results obtained from stochastic simulations for the L{\'e}vy flight escape problem in section~\ref{escape} with $d=3$. Top left: Empirical probability density of $(\rho N)T_{N}$ for $\alpha=1.5$. Top right: Kolmogorov-Smirnov distance in \eqref{ks} between the empirical probability density of $(\rho N)T_{N}$ and a unit rate exponential for different choices of $\alpha$. Bottom: Absolute errors for the mean and standard deviation in \eqref{aer} for $\alpha=1.5$ (bottom left) and $\alpha=1$ (bottom right). In all four plots, we take $K=L=1$.}
 \label{figball}
\end{figure}
%%%%%%%%%%%%%%%%%%%%%%%%%%%%%%%%%

We emphasize that the large $N$ decay of the moments of $T_{N}$ for L{\'e}vy flights is much faster than for normal diffusion. To illustrate, let $\tau^{\textup{diff}}$ be the FHT of a pure diffusion process $\{{{X^{\textup{diff}}}}(t)\}_{t\ge0}$ to the target, $\tau^{\textup{diff}}
:=\inf\{t>0:\|{{X^{\textup{diff}}}}(t)\|\ge L\}$. 
The mean FHT is $\E[\tau^{\textup{diff}}]=\frac{L^{2}}{2dD}$ \cite{getoor1961}, where ${{D}}$ is the diffusivity of ${{X^{\textup{diff}}}}$. If $T_{N}^{\textup{diff}}:=\min\{\tau_{1}^{\textup{diff}},\dots,\tau_{N}^{\textup{diff}}\}$ is fastest FHT out of $N$ iid realizations of $\tau^{\textup{diff}}$, then \cite{weiss1983, lawley2020uni}
\begin{align*}
\E[T_{N}^{\textup{diff}}]
\sim\frac{L^{2}}{4{{D}}\ln N}\quad\text{as }N\to\infty.
\end{align*}
Now, it is straightforward to choose the diffusion coefficient of ${{X^{\textup{diff}}}}$ so that $\E[\tau]=\E[\tau^{\textup{diff}}]$. Hence, for these parameters, the mean FHT for a single L{\'e}vy flight and a single diffusion process are identical, but the mean fastest FHT for many L{\'e}vy flights is much faster than for many diffusion processes.  
 
%%%%%%%%%%%%%%%%%%%%%%%%%%%%%%%%%%%%%%%%%%%%%%%%%%%%%%%%
\subsection{Tempered stable subordinator and gamma subordinator}\label{othersubs}

The slow power law decay of the L{\'e}vy measure $\nu$ of the stable subordinator $S$ means that a L{\'e}vy flight $X$ often takes large jumps. This may be undesirable in some modeling situations, and thus it common to ``temper'' the stable subordinator by multiplying its L{\'e}vy measure by a decaying exponential in order to suppress these large jumps. Specifically, the so-called tempered stable subordinator is defined by zero drift and the following Laplace exponent and L{\'e}vy measure,
\begin{align}\label{tempered}
\Phi(\beta)
={{K}}((\beta+\mu)^{\alpha/2}-\mu^{\alpha/2}),\qquad
\frac{\nu(\dd s)}{\dd s}
={{K}}\frac{\alpha/2}{\Gamma(1-\alpha/2)}\frac{e^{-\mu s}}{s^{1+\alpha/2}},\quad s>0,
\end{align}
for $\alpha\in(0,2)$, ${{K}}>0$, and $\mu>0$. Taking $\alpha\to0$ in the exponent in the L{\'e}vy measure of the tempered stable subordinator yields another subordinator commonly used in modeling called the gamma subordinator, which has zero drift and the following Laplace exponent and L{\'e}vy measure for some rate $C>0$,
\begin{align}\label{gammasub}
\Phi(\beta)
={{C}}\log\Big(\frac{\beta+\mu}{\mu}\Big),\qquad
\frac{\nu(\dd s)}{\dd s}
={{C}}\frac{e^{-\mu s}}{s},\quad s>0.
\end{align}

Suppose $S=\{S(t)\}_{t\ge0}$ is the gamma subordinator defined by \eqref{gammasub} and let $X(t):={{B}}(S(t))$ where ${{B}}=\{{{B}}(s)\}_{s\ge0}$ is a 3-dimensional Brownian motion. Letting the target be as in \eqref{ballt}, Theorem~\ref{key} implies that \eqref{tae} holds with
\begin{align*}
\rho
&=C\int_{0}^{\infty}\P(\|{{B}}(s)\|\ge L)\frac{e^{-\mu s}}{s}\,\dd s
=2C\Big(e^{-L\sqrt{\mu}}+\int_{L\sqrt{\mu}}^{\infty}\frac{e^{-z}}{z}\,\dd z\Big).
\end{align*}
Theorem~\ref{key} further implies the convergence in distribution in \eqref{ted}-\eqref{tedk} and the moment behavior in \eqref{tm} (it is straightforward to check that $\E[T_{N}]\le\E[\tau]<\infty$). These results are illustrated in Figure~\ref{figgamma} using stochastic simulations (see section~\ref{simulation}). In the left panel, we illustrate the convergence in distribution in \eqref{ted} by plotting the Kolmogorov-Smirnov distance in \eqref{ks} as a function of $N$. The moment convergence in \eqref{tm} is illustrated in the right panel of Figure~\ref{figgamma}, where we plot the absolute errors for the mean and standard deviation (see \eqref{aer}) as functions of $N$.

%%%%%%%%%%%%%%%%%%%%%%%%%%%%%%%%%
\begin{figure}
  \centering
              \includegraphics[width=0.465\textwidth]{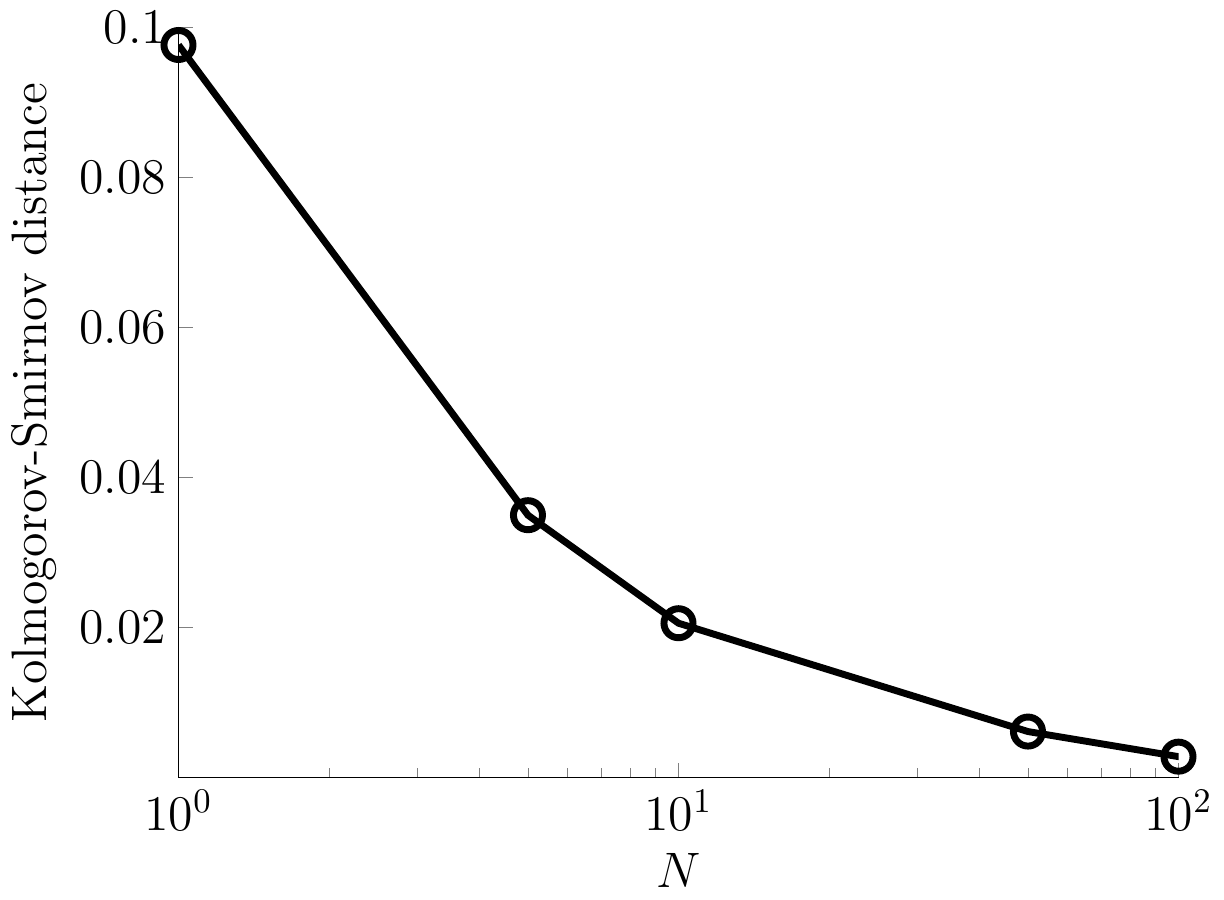}
                \qquad
        \includegraphics[width=0.465\textwidth]{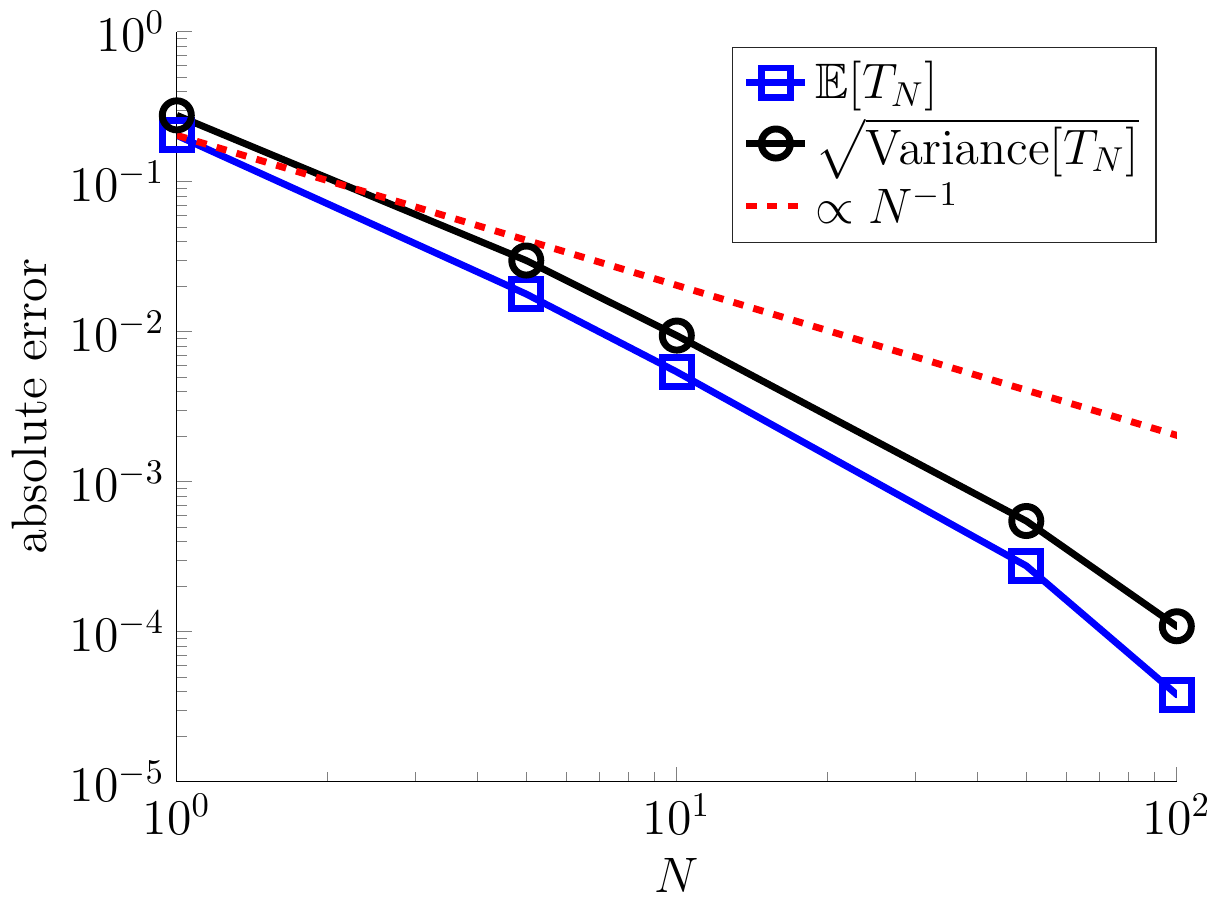}
 \caption{Comparison of Theorem~\ref{key} and empirical results obtained from stochastic simulations for the problem in section~\ref{othersubs} of a Brownian motion subordinated by a gamma subordinator. Left: Kolmogorov-Smirnov distance in \eqref{ks} between the empirical probability density of $(\rho N)T_{N}$ and a unit rate exponential for different choices of $\alpha$. Right: Absolute errors for the mean and standard deviation in \eqref{aer}. In both plots, we take $\mu=C=L=1$.}
 \label{figgamma}
\end{figure}
%%%%%%%%%%%%%%%%%%%%%%%%%%%%%%%%%

%%%%%%%%%%%%%%%%%%%%%%%%%%%%%%%%%%%%%%%%%%%%%%%%%%%%%%%%
\subsection{Annular target in $\R^{d}$}

As in section~\ref{escape}, consider a L{\'e}vy flight ${{X}}$ in $\R^{d}$ with ${{X}}(0)=0\in\R^{d}$. However, now suppose that the target is the annular region, 
\begin{align*}
U=\{x\in\R^{d}:0<L_{-}\le\|x\|\le L_{+}\},\quad\text{where }0<L_{-}< L_{+}.
\end{align*}
Hence, \eqref{tae} holds with $\rho=\rho(L_{+})-\rho(L_{-})>0$, where $\rho(L_{\pm})$ is defined in \eqref{rhol} since
\begin{align*}
\P({{B}}(s)\in U)
=\P(\|{{B}}(s)\|\ge L_{-})
-\P(\|{{B}}(s)\|\ge L_{+}).
\end{align*}

This example illustrates some features not seen in the examples above. First, the FHT $\tau$ to $U$ is not the same as the first passage time, $\tau_{\textup{fpt}}:=\inf\{t>0:\|{{X}}(t)\|\ge L_{-}\}$. This is because, in contrast to normal diffusion, ${{X}}$ is a jump process, and therefore it may ``leapover'' the annulus $U$ so that $\tau_{\textup{fpt}}<\tau$. Second, the FHT is infinite with positive probability in dimensions $d\ge3$. That is, there exists $q(d)>0$ so that
\begin{align}\label{qd}
\P(\tau=\infty)
=q(d)>0\quad\text{in dimension }d\ge3.
\end{align}
To see why \eqref{qd} holds, note that ${{X}}$ may leap over $U$ with positive probability. After leaping over $U$, the process starts at some radius larger than $L_{+}$ and may never return to a radius less than $L_{+}$, as a result of the strong Markov property and the fact that Brownian motion is transient if $d\ge3$. Third, \eqref{qd} implies that $\P(T_{N}=\infty)
=(q(d))^{N}>0$ if $d\ge3$. Therefore, the mean fastest FPT is infinite if $d\ge3$,
\begin{align*}
\E[T_{N}]=\infty\quad\text{for every $N\ge1$ if $d\ge3$}.
\end{align*}
Hence, Theorem~\ref{key} ensures that the convergence in distribution in \eqref{ted}-\eqref{tedk} holds, but the moment asymptotics in \eqref{tm} do not hold.

%%%%%%%%%%%%%%%%%%%%%%%%%%%%%%%%%%%%%%%%%%%%%%%%%%%%%%%%
\subsection{Poisson distributed targets in $\R^{d}$}

Consider again a L{\'e}vy flight ${{X}}$ in $\R^{d}$. Studies of the efficiency of superdiffusive search often consider Poisson distributed targets \cite{levernier2020}. To illustrate, suppose $\{x_{i}\}_{i\in\mathbb{N}}$ is a $d$-dimensional Poisson spatial point process with constant density $\lambda>0$. Fix a realization of $\{x_{i}\}_{i\in\mathbb{N}}$ and suppose that the target is obtained by making each point $x_{i}\in\R^{d}$ into a ball of radius $l>0$,
\begin{align*}
U
:=\{x\in\R^{d}:\|x-x_{i}\|\le l\quad\text{for some }i\in\mathbb{N}\}.
\end{align*}
Prior work often considers the case of sparse targets, which means that $\lambda l^{d}V_{d}\ll1$, where $V_{d}:=\pi^{d/2}/\Gamma(1+d/2)>0$ is the $d$-dimensional volume of a unit sphere.

If the support of the initial distribution of ${{X}}(0)$ does not intersect the target (see \eqref{away}), then Theorem~\ref{key} applies. To approximate the rate $\rho$ in \eqref{tae}, we use that $\P({{B}}(s)\in U)$ vanishes exponentially as $s\to0+$ and $\P({{B}}(s)\in U)\to \lambda l^{d}V_{d}\in(0,1)$ as $s\to\infty$, since $\lambda l^{d}V_{d}$ is the fraction of space occupied by targets. The characteristic  distance between neighboring $x_{i}$ and $x_{j}$ is $L:=(\lambda V_{d})^{-1/d}\gg l$ and so the characteristic timescale when ${{B}}$ reaches the target is $L^{2}$ (${{B}}$ has unit diffusivity). Hence, if we approximate $\P({{B}}(s)\in U)$ by 0 for $s<L^{2}$ and by $\lambda l^{d}V_{d}$ for $s>L^{2}$, then we obtain
\begin{align}\label{rhoapprox}
\rho
&\approx K\int_{L^{2}}^{\infty}\lambda l^{d}V_{d}\frac{\alpha/2 }{\Gamma(1-\frac{\alpha}{2})}\frac{1}{s^{1+\alpha/2}}\,\dd s
=\frac{V_{d}^{1+\alpha/d}}{\Gamma(1-\frac{\alpha}{2})}{{K}}l^{d}\lambda^{1+\alpha/d}.
\end{align}
If we define $X$ via $X(t):=B(S(t))$ where $S$ is the tempered stable subordinator in \eqref{tempered} with $\mu>0$, then the analysis above holds and the approximation in \eqref{rhoapprox} is 
\begin{align*}
\rho
&\approx K\int_{L^{2}}^{\infty}\lambda l^{d}V_{d}\frac{\alpha/2 }{\Gamma(1-\frac{\alpha}{2})}\frac{e^{-\mu s}}{s^{1+\alpha/2}}\,\dd s
=\frac{Kl^{d}V_{d}\alpha\lambda\mu^{\alpha/2}\Gamma(-\frac{\alpha}{2},(V_{d}\lambda)^{-2/d}\mu)}{2\Gamma(1-\frac{\alpha}{2})}.
\end{align*}

%%%%%%%%%%%%%%%%%%%%%%%%%%%%%%%%%%%%%%%%%%%%%%%%%%%%%%%%
\subsection{Stochastic simulation algorithm}\label{simulation}

We now give the stochastic simulation algorithm used to generate FHTs of L{\'e}vy flights. Given a discrete time step $\Delta t>0$, we generate a statistically exact path of the $(\alpha/2)$-stable subordinator $S=\{S(t)\}_{t\ge0}$ on the discrete time grid $\{t_{k}\}_{k\in\mathbb{N}}$ with $t_{k}=k\Delta t$ via
\begin{align*}
S(t_{k+1})
=S(t_{k})+(\Delta t)^{2/\alpha}\Theta_{k},\quad k\ge0,
\end{align*}
where $S(t_{0})=S(0)=0$ and $\{\Theta_{k}\}_{k\in\mathbb{N}}$ is an iid sequence of realizations of \cite{carnaffan2017}
\begin{align*}
\Theta
=\frac{\sin(\gamma(V+\pi/2)}{(\cos(V))^{1/\gamma}}\bigg(\frac{\cos(V-\gamma(V+\pi/2))}{E}\bigg)^{\frac{1-\gamma}{\gamma}},\quad\text{with }\gamma:=\alpha/2\in(0,1),
\end{align*}
where $V$ is uniformly distributed on $(-\pi/2,\pi/2)$ and $E$ is an independent exponential random variable with $\E[E]=1$. This allows us to generate a statistically exact path of the Brownian motion $\{{{B}}(s)\}_{s\ge0}$ on the (random) discrete time grid $\{S(t_{k})\}_{k\in\mathbb{N}}$ via
\begin{align*}
{{B}}(S(t_{k+1}))
={{B}}(S(t_{k}))+\sqrt{2({{K}}\Delta t)^{2/\alpha}\Theta_{k}}\xi_{k},\quad k\ge0,
\end{align*}
where $\{\xi_{k}\}_{k\in\mathbb{Z}}$ is an iid sequence of standard $d$-dimensional Gaussian vectors. Finally, we obtain a statistically exact path of the L{\'e}vy process ${{X}}=\{{{X}}(t)\}_{t\ge0}$ on the discrete time grid $\{t_{k}\}_{k\in\mathbb{N}}$ via ${{X}}(t_{k})
={{B}}(S(t_{k}))$ for $k\ge0$. The FHT $\tau$ to $U\subset\R^{d}$ is then approximated by $\overline{k}
:=\min\{k\Delta t\ge0:{{X}}(t_{k})\in U\}$.

Paths of the gamma subordinated Brownian motion in  section~\ref{othersubs} are simulated using the same method, except that $\{\Theta_{k}\}_{k\in\mathbb{N}}$ is an iid sequence of realizations of gamma random variables with shape $C\Delta t>0$ and rate $\mu>0$. The data in Figures~\ref{fighalf}-\ref{figgamma} is computed from $10^{5}$ independent trials with $\Delta t=10^{-5}$.
 
%%%%%%%%%%%%%%%%%%%%%%%%%%%%%%%%%%%%%%%%%%%%%%%%%%%%%%%%%%%%%%%%%%%%%%%%%%%%%%%%%%%%%%%%%%%%%%%%%%%%%%%%%%%%%%%%%%%%%%%%%%%%%%%%%%%%%%%%%%%%%%%%%%%%%%%%%%%%%%%%%%%%%%%%%%%%%%%%%%%%%%%%%%%%%%%%%%%%%%%%%%%%%%%%%%%%%%%%%%%%%%%%%%%%%%%%%%%%%%%%%%%%%
\section{Discussion}

Most studies of search processes measure the speed of search in terms of the FHT of a single searcher. In this paper, we considered the scenario in which there are $N\gg1$ iid searchers and studied the FHT of the fastest searcher to find the target. Our analysis involved finding the short-time distribution of the FHT of a single searcher and using this to find the distribution and moments of the FHT for the fastest searcher. Our results apply to searchers whose paths follow a subordinate Brownian motion, which is any process obtained by composing a Brownian motion with a L{\'e}vy subordinator. We were primarily interested in the case that the searchers move by L{\'e}vy flights, which is a prototypical model for superdiffusive search \cite{dubkov2008}.

Previous analysis of extreme FHTs has focused on diffusion, which began with the work of Weiss, Shuler, and Lindenberg in 1983 \cite{weiss1983}. The $1/N$ decay of mean extreme FHTs for subordinate Brownian motion contrasts sharply with the well-known $1/\ln N$ decay of extreme FHTs for diffusion (compare \eqref{decay} and \eqref{diff0}). See the Introduction section for more on how extreme statistics and large deviation theory for subordinate Brownian motion compare to diffusion. Our results also contrast with results on extreme FHTs of subdiffusive processes modeled by a time fractional Fokker-Planck equation \cite{lawley2020sub}. For searchers exploring a discrete space, an interesting recent study analyzed extreme FHTs for L{\'e}vy  walks on the two-dimensional integer lattice \cite{clementi2020}, which was motivated by the L{\'e}vy flight foraging hypothesis described in the Introduction section above. Other works investigating extreme FHTs on discrete state networks include \cite{weng2017, feinerman2012} in discrete time and \cite{lawley2020networks} in continuous-time. 

Biological search processes are often modeled by superdiffusive L{\'e}vy walks \cite{reynolds2018}, which are similar to L{\'e}vy  flights but move with a finite velocity \cite{shlesinger1986}. In particular, L{\'e}vy  walks follow ballistic flights of uniformly distributed random directions and constant speed, and the lengths of the flights are chosen from a probability density with the slow power law decay in \eqref{pl}. L{\'e}vy  walks are thus similar to run-and-tumble processes, except run-and-tumble models typically assume the distance of each ballistic flight (i.e.\ a ``run'') is chosen from an exponential distribution. The choice of an exponential distribution makes a run-and-tumble a piecewise deterministic Markov process. While L{\'e}vy  walks are not Markovian, they are nonetheless piecewise deterministic in the sense that the motion is deterministic (constant velocity in a fixed direction) between turns. Extreme FHTs of piecewise deterministic processes were analyzed in \cite{lawley2021pdmp}, and it would be interesting to apply that theory to L{\'e}vy walks. 

%%%%%%%%%%%%%%%%%%%%%%%%%%%%%%%%%%%%%%%%%%%%%%%%%%%%%%%%%%%%%%%%%%%%%%%%%%%%%%%%%%%%%%%%%%%%%%%%%%%%%%%%%%%%%%%%%%%%%%%%%%%%%%%%%%%%%%%%%%%%%%%%%%%%%%%%%%%%%%%%%%%%%%%%%%%%%%%%%%%%%%%%%%%%%%%%%%%%%%%%%%%%%%%%%%%%%%%%%%%%%%%%%%%%%%%%%%%%%%%%%%%%%
\section{Appendix}

In this appendix, we prove the results in the main text.

%%%%%%%%%%%%%%%%%%%%%%%%%%%%%%%%%%%%%%%%%%%%%%%%%%%%%%%%%%%%%%%%%%%%%%%%%%%%%%%%%%%%%%%%%%%%%%%%
\begin{lemma}\label{cpp}
Assume $S=\{S(t)\}_{t\ge0}$ is a compound Poisson process plus a drift, meaning its Laplace exponent is in \eqref{le} with $b\ge0$ and $\int_{0}^{\infty}\,\nu(\dd z)\in(0,\infty)$. If $F:[0,\infty)\to[0,1]$ is continuous and satisfies \eqref{fv}, then \eqref{fc} holds.
\end{lemma}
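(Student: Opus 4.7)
My plan is to exploit the fact that a compound Poisson process with drift has the explicit decomposition $S(t) = bt + \sum_{i=1}^{N(t)} J_i$, where $N=\{N(t)\}_{t\ge 0}$ is a homogeneous Poisson process of rate $\lambda := \nu((0,\infty)) \in (0,\infty)$ and $\{J_i\}_{i\ge 1}$ are iid jumps independent of $N$ with common law $\nu(\cdot)/\lambda$. Conditioning on the number of jumps in $[0,t]$ then gives
\begin{align*}
\E[F(S(t))]
= e^{-\lambda t}F(bt)
+ e^{-\lambda t}(\lambda t)\,\E\big[F(bt + J_1)\big]
+ R(t),
\end{align*}
where $R(t) := e^{-\lambda t}\sum_{n\ge 2}\tfrac{(\lambda t)^n}{n!}\E[F(bt + J_1 + \cdots + J_n)]$ collects the two-or-more-jump contributions. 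With this in hand, the lemma reduces to analyzing each of the three terms as $t \to 0+$.

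Dividing by $t$, I would handle the three terms as follows. For the zero-jump term, $e^{-\lambda t} F(bt)/t \to bF'(0)$ using the assumption $F(0)=0$ together with the existence of the limit in \eqref{fv}: when $b>0$, write $F(bt)/t = b\cdot F(bt)/(bt) \to bF'(0)$, and when $b=0$ both sides vanish. For the one-jump term, continuity of $F$ ensures that $F(bt+J_1) \to F(J_1)$ pointwise, so the bound $0 \le F \le 1$ together with dominated (in fact bounded) convergence yields $e^{-\lambda t}\lambda\,\E[F(bt+J_1)] \to \lambda\,\E[F(J_1)] = \int_0^\infty F(s)\,\nu(\dd s)$. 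For the remainder, boundedness of $F$ by $1$ gives $R(t) \le \sum_{n\ge 2}(\lambda t)^n/n! = 1 - (1+\lambda t)e^{-\lambda t} \cdot e^{\lambda t}\cdot\text{const} = O(t^2)$, so $R(t)/t \to 0$. Summing the three limits produces exactly $\rho = bF'(0) + \int_0^\infty F(s)\,\nu(\dd s)$, and finiteness of $\rho$ is automatic from $F \le 1$ and $\lambda < \infty$.

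There is no genuine obstacle here; the compound Poisson structure reduces everything to elementary estimates. The only point requiring mild care is the zero-jump term, where one must use both halves of condition \eqref{fv}: the value $F(0)=0$ guarantees that the term is $o(1)$, and the existence of $F'(0)$ upgrades this to the precise linear rate $bF'(0)$. Boundedness $F \le 1$ is used twice — to apply bounded convergence in the one-jump term and to bound the remainder crudely — which is exactly why the authors restrict the range of $F$ to $[0,1]$ rather than merely assuming continuity.
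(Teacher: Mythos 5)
Your proof is correct and takes essentially the same route as the paper: decompose $\E[F(S(t))]$ by the number of Poisson jumps in $[0,t]$, use both halves of \eqref{fv} for the zero-jump term, apply bounded/dominated convergence to the one-jump term, and bound the $\ge 2$-jump remainder by $O(t^2)$. One small typo: the closed form you wrote for $\sum_{n\ge 2}(\lambda t)^n/n!$ is garbled (it equals $e^{\lambda t}-1-\lambda t$, equivalently $e^{\lambda t}\bigl(1-(1+\lambda t)e^{-\lambda t}\bigr)$), but the $O(t^2)$ conclusion stands and the argument is sound.
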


%%%%%%%%%%%%%%%%%%%%%%%%%%%%%%%%%%%%%%%%%%%%%%%%%%%%%%%%%%%%%%%%%%%%%%%%%%%%%%%%%%%%%%%%%%%%%%%%
\begin{proof}[Proof of Lemma~\ref{cpp}]
By assumption, we have that $S(t)=bt+\sum_{m=1}^{M(t)}Z_{m}$, where $M=\{M(t)\}_{t\ge0}$ is a Poisson process with rate $\lambda=\int_{0}^{\infty}\,\nu(\dd z)\in(0,\infty)$ and $\{Z_{m}\}_{m\ge1}$ are iid nonnegative random variables independent of $M$. In this case, the probability measure of $Z_{m}$ is $\nu(\dd z)/\lambda$. Decomposing the mean based on the value of $M(t)$ yields
\begin{align*}
\E[F(S(t))]
=\E[F(S(t))1_{M(t)=0}]+\E[F(S(t))1_{M(t)=1}]+\E[F(S(t))1_{M(t)\ge2}],
\end{align*}
where $1_{A}$ denotes the indicator function on an event $A$. Since $M(t)$ is a Poisson random variable with mean $\lambda t$ and $F$ is bounded, we have that $\E[F(S(t))1_{M(t)\ge2}]=o(t)$ as $t\to0+$. Furthermore, since $M$ and $Z_{1}$ are independent, we have that
\begin{align*}
\E[F(S(t))1_{M(t)=1}]
&=\P(M(t)=1)\E[F(bt+Z_{1})]
=\lambda t e^{-\lambda t}\E[F(bt+Z_{1})],\\
\E[F(S(t))1_{M(t)=0}]
&=\P(M(t)=0)\E[F(bt)]
=e^{-\lambda t}F(bt).
\end{align*}
Since $F$ is bounded, $F$ is continuous, and $\int_{0}^{\infty}\,\nu(\dd s)<\infty$, we complete the proof by applying the Lebesgue dominated convergence to conclude
\begin{align*}
\E[F(bt+Z_{1})]
=\frac{1}{\lambda}\int_{0}^{\infty}F(bt+s)\,\nu(\dd s)
\to\frac{1}{\lambda}\int_{0}^{\infty}F(s)\,\nu(\dd s)\quad\text{as }t\to0+.
\end{align*}
\end{proof}

%%%%%%%%%%%%%%%%%%%%%%%%%%%%%%%%%%%%%%%%%%%%%%%%%%%%%%%%%%%%%%%%%%%%%%%%%%%%%%%%%%%%%%%%%%%%%%%%
\begin{proof}[Proof of Proposition~\ref{ias2}]

The boundedness of $F$ and \eqref{fv} ensure that the integral in \eqref{fc} is finite. Let $\eps=2^{-j}>0$ for some $j\in\{0,1,2,\dots\}$ and define 
\begin{align}\label{se}
\begin{split}
S_{[\eps,\infty)}(t)
&:=bt+\iint_{z\in[\eps,\infty),\,t'\in[0,t]} z\,\mathbf{N}(\dd t',\dd z),\\
S_{(0,\eps)}(t)
&:=\iint_{z\in(0,\eps),\,t'\in[0,t]} z\,\mathbf{N}(\dd t',\dd z),
\end{split}
\end{align}
where $\mathbf{N}$ is a Poisson point process on the first quadrant with intensity measure $\dd t'\,\nu(\dd z)$. The process $S$ can then be written as $S(t)=S_{[\eps,\infty)}(t)+S_{(0,\eps)}(t)$. Since $F$ is Lipschitz, there exists a constant $\kappa>0$ so that
\begin{align}\label{2side}
\begin{split}
&\E[F(S_{[\eps,\infty)}(t))]
-\kappa\E[S_{(0,\eps)}(t))]
\le\E[F(S(t))]\\
&\qquad\qquad\qquad\le \E[F(S_{[\eps,\infty)}(t))]
+\kappa\E[S_{(0,\eps)}(t))]\quad\text{for all }t>0.
\end{split}
\end{align}
Since $S_{[\eps,\infty)}$ is a compound Poisson process plus a drift, Lemma~\ref{cpp} implies that
\begin{align}\label{first}
\lim_{t\to0+}t^{-1}\E[F(S_{[\eps,\infty)}(t))]
=\rho_{\eps}
:=bF'(0)+\int_{\eps}^{\infty}F(s)\,\nu(\dd s)
<\infty.
\end{align}

To handle the terms in \eqref{2side} involving $S_{(0,\eps)}(t)$, recall that $\eps=2^{-j}$ and observe that a dyadic partitioning of the interval $(0,\eps)$ yields
\begin{align*}
S_{(0,\eps)}(t)
&:=\iint_{z\in(0,\eps),\,t'\in[0,t]} z\,\mathbf{N}(\dd t',\dd z)
\le\sum_{k=j}^{\infty}2^{-k}\mathbf{N}([0,t]\times[2^{-k-1},2^{-k}]).
\end{align*}
Since $\mathbf{N}$ is a Poisson point process, we have that
\begin{align*}
2^{-k}\E\big[\mathbf{N}([0,t]\times[2^{-k-1},2^{-k}])\big]
=2t\int_{2^{-k-1}}^{2^{-k}}2^{-k-1}\,\nu(\dd z)
\le2t\int_{2^{-k-1}}^{2^{-k}}z\,\nu(\dd z).
\end{align*}
Therefore,
\begin{align}\label{nicebound}
\E[S_{(0,\eps)}(t)]
\le2t\int_{0}^{\eps}z\,\nu(\dd z).
\end{align}
Combining \eqref{2side} with \eqref{first} and \eqref{nicebound} yields
\begin{align*}
&\rho_{\eps}
-2\kappa\int_{0}^{\eps}z\,\nu(\dd z)
\le\liminf_{t\to0+}\frac{\E[F(S(t))]}{t}
\le\limsup_{t\to0+}\frac{\E[F(S(t))]}{t}
\le\rho_{\eps}
+2\kappa\int_{0}^{\eps}z\,\nu(\dd z).
\end{align*}
Since these bounds converge to $\rho$ as $\eps\to0+$, the proof is complete. 
\end{proof}

%%%%%%%%%%%%%%%%%%%%%%%%%%%%%%%%%%%%%%%%%%%%%%%%%%%%%%%%%%%%%%%%%%%%%%%%%%%%%%%%%%%%%%%%%%%%%%%% 
\begin{lemma}\label{mono}
Let $H:[0,\infty)\to[0,1]$ be nondecreasing and satisfy \eqref{fv}. Then $\limsup_{t\to0+}\E[H(S(t))]/t<\infty$.
\end{lemma}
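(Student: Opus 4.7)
The plan is to dominate $H$ by a Lipschitz function $F:[0,\infty)\to[0,1]$ that still satisfies \eqref{fv}, and then appeal directly to Proposition~\ref{ias2}. Since $H$ is nondecreasing with values in $[0,1]$, monotonicity will give $\E[H(S(t))]\le\E[F(S(t))]$ for every $t\ge0$, and Proposition~\ref{ias2} will pin down the right-hand side as $O(t)$ as $t\to0+$, yielding the claimed finite $\limsup$.

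The dominator is built as follows. Because $H'(0)=\lim_{s\to0+}H(s)/s\in[0,\infty)$ exists and is finite, there is some $\delta_{0}>0$ with
\begin{align*}
H(s)\le (H'(0)+1)\,s\quad\text{for every }s\in[0,\delta_{0}].
\end{align*}
Set $\kappa:=\max\{H'(0)+1,\,1/\delta_{0}\}$ (so in particular $1/\kappa\le\delta_{0}$) and define
\begin{align*}
F(s):=\min\{\kappa s,\,1\},\quad s\ge0.
\end{align*}
Then $F$ is Lipschitz with constant $\kappa$, $F(0)=0$, and $F'(0)=\kappa\in[0,\infty)$, so \eqref{fv} holds.

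It remains to check $H\le F$ pointwise. For $s\in[0,1/\kappa]\subseteq[0,\delta_{0}]$ one has $F(s)=\kappa s\ge(H'(0)+1)s\ge H(s)$, while for $s>1/\kappa$ one has $F(s)=1\ge H(s)$ because $H$ takes values in $[0,1]$. Proposition~\ref{ias2} applied to $F$ then gives
\begin{align*}
\lim_{t\to0+}\frac{\E[F(S(t))]}{t}
=b\kappa+\int_{0}^{\infty}F(s)\,\nu(\dd s)<\infty,
\end{align*}
where finiteness of the integral follows from $F(s)\le\min\{\kappa s,1\}$ together with $\int_{0}^{\infty}\min\{1,s\}\,\nu(\dd s)<\infty$. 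Combining this with $\E[H(S(t))]\le\E[F(S(t))]$ yields the desired bound on the $\limsup$.

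The only real subtlety is the construction of $F$: one cannot simply apply Proposition~\ref{ias2} to $H$ because $H$ is not assumed Lipschitz (it may have jumps or arbitrary behavior away from the origin). Both ingredients, the boundedness $H\le 1$ (to cap large-$s$ behavior) and the existence of a finite $H'(0)$ (to control the ratio $H(s)/s$ near zero), are needed so that $F(s)=\min\{\kappa s,1\}$ dominates $H$ globally while still satisfying \eqref{fv}. Once this dominator is in hand, the lemma follows essentially for free from the previous proposition.
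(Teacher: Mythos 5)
Your proof is correct, and it takes a genuinely different route from the paper's. The paper proves Lemma~\ref{mono} by reusing the decomposition $S = S_{[\eps,\infty)} + S_{(0,\eps)}$ from the proof of Proposition~\ref{ias2}: it exploits the monotonicity of $H$ to write $H(S(t)) \le H(2S_{[\eps,\infty)}(t)) + H(2S_{(0,\eps)}(t))$, handles the compound-Poisson piece $S_{[\eps,\infty)}$ via Lemma~\ref{cpp}, and handles the small-jump piece $S_{(0,\eps)}$ by linearizing $H$ near the origin ($H(s)\le\theta s/s_{0}$) and invoking the moment bound \eqref{nicebound}. You instead construct a global Lipschitz dominator $F(s)=\min\{\kappa s,1\}$ with $H\le F$ pointwise and $F$ satisfying \eqref{fv}, and then quote Proposition~\ref{ias2} for $F$ as a black box. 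Both arguments are sound; yours is shorter, stays at a higher level of abstraction, and --- notably --- never actually uses that $H$ is nondecreasing (only that $H\in[0,1]$ and $H'(0)<\infty$), whereas the paper's argument does rely on monotonicity for the subadditivity step $H(a+b)\le H(2a)+H(2b)$. One stylistic quibble: your opening sentence says ``monotonicity will give $\E[H(S(t))]\le\E[F(S(t))]$,'' but what actually gives that inequality is the pointwise domination $H\le F$, not monotonicity of $H$; you establish the pointwise bound correctly in the body, so the proof itself is fine.
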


%%%%%%%%%%%%%%%%%%%%%%%%%%%%%%%%%%%%%%%%%%%%%%%%%%%%%%%%%%%%%%%%%%%%%%%%%%%%%%%%%%%%%%%%%%%%%%%%
\begin{proof}[Proof of Lemma~\ref{mono}]
Using the definitions in \eqref{se}, we have that
\begin{align*}
H(S(t))
=H(S_{[\eps,\infty)}(t)+S_{(0,\eps)}(t))
&\le H(2 S_{[\eps,\infty)}(t)) +H(2S_{(0,\eps)}(t)).
\end{align*}
Since $2 S_{[\eps,\infty)}(t)$ is a compound Poisson process plus a drift, Lemma~\ref{cpp} ensures that $
\lim_{t\to0+}\E[H(2 S_{[\eps,\infty)}(t))]/t
<\infty$. Since $H$ satisfies \eqref{fv}, there exists an $s_{0}\in(0,1]$ and a $\theta\ge1$ so that $H(s)\le \theta s$ for all $s\in(0,s_{0}]$. Therefore, $H(s)\le \theta s/s_{0}$ for all $s\ge0$. The proof is complete since \eqref{nicebound} implies
\begin{align*}
\E[H(2S_{(0,\eps)}(t))]
\le\frac{2\theta}{s_{0}}\E[S_{(0,\eps)}(t)]
\le\frac{4\theta t}{s_{0}}\int_{0}^{\eps}z\,\nu(\dd z).
\end{align*}
\end{proof}

%%%%%%%%%%%%%%%%%%%%%%%%%%%%%%%%%%%%%%%%%%%%%%%%%%%%%%%%
\begin{proof}[Proof of Theorem~\ref{key}]

Define $F(s):=\P(B(s)+X(0)\in U)\in[0,1]$ for $s\ge0$. Using the independence of $B$ and $X(0)$, we have
\begin{align}\label{fa}
F(s)
=\frac{1}{(4\pi s)^{d/2}}\iint_{U\times U_{0}}\exp\Big(\frac{-\|x-x_{0}\|^{2}}{4s}\Big)\,\mu_{0}(\dd x_{0})\,\dd x,\quad\text{if }s>0,
\end{align}
where $\mu_{0}$ is the probability measure of $X(0)$ with support $U_{0}\subset\R^{d}$. Using standard results for interchanging differentiation with integration (for example, see Theorem A.5.3 in \cite{durrett2019}), $F(s)$ is infinitely differentiable and each derivative is bounded. Furthermore, \eqref{away} ensures that $F(0)=F'(0)=0$ and thus Proposition~\ref{ias2} implies
\begin{align}\label{ef}
\lim_{t\to0+}\frac{\P({{X}}(t)\in U)}{t}
=\lim_{t\to0+}\frac{\E[F(s)]}{t}
=\rho
:=\int_{0}^{\infty}\P({{B}}(s)+X(0)\in U)\,\nu(\dd s).
\end{align}
In the first equality in \eqref{ef}, we have used the independence of $B$, $S$, and $X(0)$. Note that $\rho\in(0,\infty)$. Indeed, Proposition~\ref{ias2} implies $\rho<\infty$. Further, $\rho>0$ by (i) the assumption in \eqref{nontrivial}, (ii) the fact that ${{B}}(s)\in\R^{d}$ is a Gaussian random variable with variance proportional to $s>0$, and (iii) $U$ has strictly positive Lebesgue measure (since $U$ is nonempty and the closure of its interior).

To complete the proof, we therefore need to show that
\begin{align}\label{wts}
\lim_{t\to0+}t^{-1}\P(\tau\le t)
=\lim_{t\to0+}t^{-1}\P({{X}}(t)\in U).
\end{align}
For $t>0$, define the enlarged target $U^{\delta(t)}
:=\{x\in\R^{d}:\inf_{y\in U}\|x-y\|\le\delta(t)\}$, where we set $\delta(t):=t^{1/4}>0$ in order to satisfy
\begin{align}\label{delta}
\lim_{t\to0+}\delta(t)=0
\quad\text{and}\quad
\lim_{t\to0+}\delta(t)t^{-1/2}=\infty.
\end{align}
Decomposing the event $\tau\le t$ based on the position of ${{X}}(t)$ yields
\begin{align*}
\P(\tau\le t)
&=\P({{X}}(t)\in U)
+\P(\tau\le t,{{X}}(t)\in U^{\delta(t)}\backslash U)
+\P(\tau\le t,{{X}}(t)\notin U^{\delta(t)}).
\end{align*}
Therefore, showing \eqref{wts} amounts to showing that
\begin{align}
\lim_{t\to0+}t^{-1}\P(\tau\le t,{{X}}(t)\in U^{\delta(t)}\backslash U)
&=0=\lim_{t\to0+}t^{-1}\P(\tau\le t,{{X}}(t)\notin U^{\delta(t)}).\label{staycloseandmoveaway}
\end{align}

We first prove the first equality in \eqref{staycloseandmoveaway}. Since ${{X}}(t)={{B}}(S(t))+X(0)$ and ${{B}}$, $X(0)$, and $S$ are independent, integrating over the possible values of $S(t)$ yields
\begin{align*}
\P(\tau\le t,{{X}}(t)\in U^{\delta(t)}\backslash U)
&\le\P({{X}}(t)\in U^{\delta(t)}\backslash U)
=\E[F_{0}(S(t);t)],
\end{align*}
where $F_{0}(s;t):=\P({{B}}(s)+X(0)\in U^{\delta(t)}\backslash U)$. By the assumption in \eqref{away}, we may take $t_{0}$ sufficiently small so that $U^{\delta(t_{0})}\cap U_{0}=\varnothing$. Therefore, if $t\in(0,t_{0}]$, then $F_{0}(s;t)$ satisfies the assumptions of Proposition~\ref{ias2} (by the same argument used for $F(s)$ in \eqref{fa}). Therefore, Proposition~\ref{ias2} implies that we may take $t$ sufficiently small so that,
\begin{align*}
t^{-1}\P(\tau\le t,{{X}}(t)\in U^{\delta(t)}\backslash U)
\le2\int_{0}^{\infty}\P({{B}}(s)+X(0)\in U^{\delta(t_{0})}\backslash U)\,\nu(\dd s)<\infty.
\end{align*}
Now, it is immediate that $\P({{B}}(s)+X(0)\in U^{\delta(t_{0})}\backslash U)\to0$ as $t_{0}\to0$ for each $s\ge0$. Hence, the Lebesgue dominated convergence theorem implies
\begin{align*}
\lim_{t_{0}\to0+}\int_{0}^{\infty}\P({{B}}(s)+X(0)\in U^{\delta(t_{0})}\backslash U)\,\nu(\dd s)=0,
\end{align*}
and thus the first equality in \eqref{staycloseandmoveaway} holds. Turning to the second equality in  \eqref{staycloseandmoveaway}, conditioning that $\tau\le t$ implies
\begin{align*}
\P(\tau\le t,{{X}}(t)\notin U^{\delta(t)})
=\P({{X}}(t)\notin U^{\delta(t)}\,|\,\tau\le t)\P(\tau\le t),
\end{align*}
and the fact that $\wt\le\tau$ almost surely and Lemma~\ref{mono} imply
\begin{align*}
\limsup_{t\to0+}t^{-1}\P(\tau\le t)
\le \limsup_{t\to0+}t^{-1}\P(\wt\le t)
<\infty,
\end{align*}
since $\P(\wt\le t)=\E[H(S(t)]$ where $H(s):=\P(\sigma\le s)$ is nondecreasing. Next, it follows from the strong Markov property \cite{bertoin1996} that
\begin{align*}
\P({{X}}(t)\notin U^{\delta(t)}\,|\,\tau\le t)
\le\sup_{r\in(0,t]}\P_{0}(\|{{X}}(r)\|\ge\delta(t)),
\end{align*}
where $\P_{0}$ denotes the probability measure conditioned that ${{X}}(0)=0$. Again using that $B$ and $S$ are independent, we have that 
\begin{align}\label{ab3}
\begin{split}
\P_{0}(\|{{X}}(r)\|\ge\delta(t))
=\E[F_{1}(S(r);t)]
\le\E[F_{1}(S(t);t)],\quad\text{if }r\in[0,t],
\end{split}
\end{align}
since $F_{1}(s;t):=\P(\|B(s)\|\ge\delta(t))$ is an increasing function of $s$ and $S$ is almost surely nondecreasing.
Define
\begin{align}\label{delta1}
\delta_{1}(t)
:=(1+b)t>0,
\end{align}
and observe that \eqref{ab3} implies that for $r\in(0,t]$,
\begin{align*}
\begin{split}
\P_{0}(\|{{X}}(r)\|\ge\delta(t))
&\le\E[F_{1}(S(t);t)1_{S(t)<\delta_{1}(t)}]
+\E[F_{1}(S(t);t)1_{S(t)\ge\delta_{1}(t)}]. 
\end{split}
\end{align*}
Since $S(t)/t$ converges in probability to $b\ge0$ as $t\to0+$ \cite{bertoin1996}, we have that 
\begin{align*}
\E[F_{1}(S(t);t)1_{S(t)\ge\delta_{1}(t)}]
\le\P(S(t)\ge\delta_{1}(t))
=\P(S(t)\ge(1+b)t)\to0\quad\text{as }t\to0+.
\end{align*}
Next, since $F_{1}(s;t)$ is an increasing function of $s$, we have that
\begin{align*}
\E[F_{1}(S(t);t)1_{S(t)<\delta_{1}(t)}]
\le F_{1}(\delta_{1}(t);t)
=\P(\|B(\delta_{1}(t))\|\ge\delta(t)).
\end{align*}
The Brownian scaling in \eqref{brownianscaling} and the choices of $\delta(t)$ in \eqref{delta} and $\delta_{1}(t)$ in \eqref{delta1} imply
\begin{align*}
\P(\|B(\delta_{1}(t))\|\ge\delta(t))
=\P(\|B(1)\|\ge\delta(t)(\delta_{1}(t))^{-1/2})
\to0\quad\text{as }t\to0+.
\end{align*}
Hence, the second equality in \eqref{staycloseandmoveaway} holds and the proof is complete.
\end{proof}
%%%%%%%%%%%%%%%%%%%%%%%%%%%%%%%%%%%%%%%%%%%%%%%%%%%%%%%%

%%%%%%%%%%%%%%%%%%%%%%%%%%%%%%%%%%%%%%%%%%%%%%%%%%%%%%%%%%%%%%%%%%%%%%%%%%%%%%%%%%%%%%%%%%%%%%%%%%%%%%%%%%%%%%%%%%%%%%%%%%%%%%%%%%%%%%%%%%%%%%%%%%%%%%%%%%%%%%%%%%%%%%%%%%%%%%%%%%%%%%%%%%%%%%%%%%%%%%%%%%%%%%%%%%%%%%%%

% Create the reference section using BibTeX:
\bibliography{library.bib}

\begin{thebibliography}{10}

\bibitem{benichou2011rev}
O~B{\'e}nichou, C~Loverdo, M~Moreau, and R~Voituriez.
\newblock Intermittent search strategies.
\newblock {\em Rev Mod Phys}, 83(1):81, 2011.

\bibitem{frost2001}
JR~Frost and Lawrence~D Stone.
\newblock Review of search theory: advances and applications to search and
  rescue decision support.
\newblock {\em US Department of Transportation}, 2001.

\bibitem{morse1956}
Phillip~M Morse and G~Kendall.
\newblock How to hunt a submarine: The world of mathematics, 1956.

\bibitem{reynolds2018}
Andy~M Reynolds.
\newblock Current status and future directions of {L}{\'e}vy walk research.
\newblock {\em Biology open}, 7(1), 2018.

\bibitem{viswanathan2008}
GM~Viswanathan, EP~Raposo, and MGE Da~Luz.
\newblock {L}{\'e}vy flights and superdiffusion in the context of biological
  encounters and random searches.
\newblock {\em Physics of Life Reviews}, 5(3):133--150, 2008.

\bibitem{lomholt2005}
Michael~A Lomholt, Tobias Ambj{\"o}rnsson, and Ralf Metzler.
\newblock Optimal target search on a fast-folding polymer chain with volume
  exchange.
\newblock {\em Physical review letters}, 95(26):260603, 2005.

\bibitem{kao1996}
Ming-Yang Kao, John~H Reif, and Stephen~R Tate.
\newblock Searching in an unknown environment: An optimal randomized algorithm
  for the cow-path problem.
\newblock {\em Information and Computation}, 131(1):63--79, 1996.

\bibitem{shlesinger2006}
Michael~F Shlesinger.
\newblock Search research.
\newblock {\em Nature}, 443(7109):281--282, 2006.

\bibitem{metzler2004}
Ralf Metzler and Joseph Klafter.
\newblock The restaurant at the end of the random walk: recent developments in
  the description of anomalous transport by fractional dynamics.
\newblock {\em Journal of Physics A: Mathematical and General}, 37(31):R161,
  2004.

\bibitem{dubkov2008}
Alexander~A Dubkov, Bernardo Spagnolo, and Vladimir~V Uchaikin.
\newblock {L}{\'e}vy flight superdiffusion: an introduction.
\newblock {\em International Journal of Bifurcation and Chaos},
  18(09):2649--2672, 2008.

\bibitem{meerschaert2019}
Mark~M Meerschaert and Alla Sikorskii.
\newblock {\em Stochastic models for fractional calculus}, volume~43.
\newblock Walter de Gruyter GmbH \& Co KG, 2019.

\bibitem{lischke2020}
Anna Lischke, Guofei Pang, Mamikon Gulian, Fangying Song, Christian Glusa,
  Xiaoning Zheng, Zhiping Mao, Wei Cai, Mark~M Meerschaert, Mark Ainsworth,
  et~al.
\newblock What is the fractional {L}aplacian? {A} comparative review with new
  results.
\newblock {\em Journal of Computational Physics}, 404:109009, 2020.

\bibitem{zaburdaev2015}
V~Zaburdaev, S~Denisov, and J~Klafter.
\newblock {L}{\'e}vy walks.
\newblock {\em Reviews of Modern Physics}, 87(2):483, 2015.

\bibitem{shlesinger1986}
Michael~F Shlesinger and Joseph Klafter.
\newblock {L}{\'e}vy walks versus {L}{\'e}vy flights.
\newblock In {\em On growth and form}, pages 279--283. Springer, 1986.

\bibitem{viswanathan1996}
Gandhimohan~M Viswanathan, V~Afanasyev, SV~Buldyrev, EJ~Murphy, PA~Prince, and
  H~Eugene Stanley.
\newblock {L}{\'e}vy flight search patterns of wandering albatrosses.
\newblock {\em Nature}, 381(6581):413--415, 1996.

\bibitem{viswanathan1999}
Gandimohan~M Viswanathan, Sergey~V Buldyrev, Shlomo Havlin, MGE Da~Luz,
  EP~Raposo, and H~Eugene Stanley.
\newblock Optimizing the success of random searches.
\newblock {\em nature}, 401(6756):911--914, 1999.

\bibitem{ramos2004}
Gabriel Ramos-Fern{\'a}ndez, Jos{\'e}~L Mateos, Octavio Miramontes, Germinal
  Cocho, Hern{\'a}n Larralde, and Barbara Ayala-Orozco.
\newblock {L}{\'e}vy walk patterns in the foraging movements of spider monkeys
  (ateles geoffroyi).
\newblock {\em Behavioral ecology and Sociobiology}, 55(3):223--230, 2004.

\bibitem{boyer2006}
Denis Boyer, Gabriel Ramos-Fern{\'a}ndez, Octavio Miramontes, Jos{\'e}~L
  Mateos, Germinal Cocho, Hern{\'a}n Larralde, Humberto Ramos, and Fernando
  Rojas.
\newblock Scale-free foraging by primates emerges from their interaction with a
  complex environment.
\newblock {\em Proceedings of the Royal Society B: Biological Sciences},
  273(1595):1743--1750, 2006.

\bibitem{atkinson2002}
RPD Atkinson, CJ~Rhodes, DW~Macdonald, and RM~Anderson.
\newblock Scale-free dynamics in the movement patterns of jackals.
\newblock {\em Oikos}, 98(1):134--140, 2002.

\bibitem{sims2006}
David~W Sims, Matthew~J Witt, Anthony~J Richardson, Emily~J Southall, and
  Julian~D Metcalfe.
\newblock Encounter success of free-ranging marine predator movements across a
  dynamic prey landscape.
\newblock {\em Proceedings of the Royal Society B: Biological Sciences},
  273(1591):1195--1201, 2006.

\bibitem{bartumeus2003}
Frederic Bartumeus, Francesc Peters, Salvador Pueyo, Celia Marras{\'e}, and
  Jordi Catalan.
\newblock Helical {L}{\'e}vy walks: adjusting searching statistics to resource
  availability in microzooplankton.
\newblock {\em Proceedings of the National Academy of Sciences},
  100(22):12771--12775, 2003.

\bibitem{reverey2015}
Julia~F Reverey, Jae-Hyung Jeon, Han Bao, Matthias Leippe, Ralf Metzler, and
  Christine Selhuber-Unkel.
\newblock Superdiffusion dominates intracellular particle motion in the
  supercrowded cytoplasm of pathogenic acanthamoeba castellanii.
\newblock {\em Scientific reports}, 5(1):1--14, 2015.

\bibitem{pavlyukevich2007}
Ilya Pavlyukevich.
\newblock {L}{\'e}vy flights, non-local search and simulated annealing.
\newblock {\em Journal of Computational Physics}, 226(2):1830--1844, 2007.

\bibitem{edwards2007}
Andrew~M Edwards, Richard~A Phillips, Nicholas~W Watkins, Mervyn~P Freeman,
  Eugene~J Murphy, Vsevolod Afanasyev, Sergey~V Buldyrev, Marcos~GE da~Luz,
  Ernesto~P Raposo, H~Eugene Stanley, et~al.
\newblock Revisiting {L}{\'e}vy flight search patterns of wandering
  albatrosses, bumblebees and deer.
\newblock {\em Nature}, 449(7165):1044--1048, 2007.

\bibitem{levernier2020}
Nicolas Levernier, Johannes Textor, Olivier B{\'e}nichou, and Rapha{\"e}l
  Voituriez.
\newblock Inverse square {L}{\'e}vy walks are not optimal search strategies for
  $d\ge 2$.
\newblock {\em Physical review letters}, 124(8):080601, 2020.

\bibitem{buldyrev2021}
SV~Buldyrev, EP~Raposo, Frederic Bartumeus, S~Havlin, FR~Rusch, MGE da~Luz, and
  GM~Viswanathan.
\newblock {C}omment on ``{I}nverse square {L}{\'e}vy walks are not optimal
  search strategies for $d\ge2$''.
\newblock {\em Physical Review Letters}, 126(4):048901, 2021.

\bibitem{levernier2021reply}
Nicolas Levernier, Johannes Textor, Olivier B{\'e}nichou, and Rapha{\"e}l
  Voituriez.
\newblock {R}eply to ``{C}omment on `{I}nverse square {L}{\'e}vy walks are not
  optimal search strategies for $d\ge2$'''.
\newblock {\em Physical Review Letters}, 126(4):048902, 2021.

\bibitem{eliazar2004}
Iddo Eliazar and Joseph Klafter.
\newblock On the first passage of one-sided {L}{\'e}vy motions.
\newblock {\em Physica A: Statistical Mechanics and its Applications},
  336(3-4):219--244, 2004.

\bibitem{koren2007}
Tal Koren, Michael~A Lomholt, Aleksei~V Chechkin, Joseph Klafter, and Ralf
  Metzler.
\newblock Leapover lengths and first passage time statistics for {L}{\'e}vy
  flights.
\newblock {\em Physical review letters}, 99(16):160602, 2007.

\bibitem{koren2007first}
T~Koren, AV~Chechkin, and J~Klafter.
\newblock On the first passage time and leapover properties of {L}{\'e}vy
  motions.
\newblock {\em Physica A: Statistical Mechanics and its Applications},
  379(1):10--22, 2007.

\bibitem{gao2014}
Ting Gao, Jinqiao Duan, Xiaofan Li, and Renming Song.
\newblock Mean exit time and escape probability for dynamical systems driven by
  {L}{\'e}vy noises.
\newblock {\em SIAM Journal on Scientific Computing}, 36(3):A887--A906, 2014.

\bibitem{palyulin2019}
Vladimir~V Palyulin, George Blackburn, Michael~A Lomholt, Nicholas~W Watkins,
  Ralf Metzler, Rainer Klages, and Aleksei~V Chechkin.
\newblock First passage and first hitting times of {L}{\'e}vy flights and
  {L}{\'e}vy walks.
\newblock {\em New Journal of Physics}, 21(10):103028, 2019.

\bibitem{wardak2020}
Asem Wardak.
\newblock First passage leapovers of {L}{\'e}vy flights and the proper
  formulation of absorbing boundary conditions.
\newblock {\em Journal of Physics A: Mathematical and Theoretical},
  53(37):375001, 2020.

\bibitem{schuss2019}
Z.~Schuss, K.~Basnayake, and D.~Holcman.
\newblock Redundancy principle and the role of extreme statistics in molecular
  and cellular biology.
\newblock {\em Physics of Life Reviews}, January 2019.

\bibitem{schoener1971}
Thomas~W Schoener.
\newblock Theory of feeding strategies.
\newblock {\em Annual review of ecology and systematics}, 2(1):369--404, 1971.

\bibitem{traniello1977}
James~FA Traniello.
\newblock Recruitment behavior, orientation, and the organization of foraging
  in the carpenter ant camponotus pennsylvanicus degeer (hymenoptera:
  Formicidae).
\newblock {\em Behavioral Ecology and Sociobiology}, 2(1):61--79, 1977.

\bibitem{holldobler1990}
Bert H{\"o}lldobler, Edward~O Wilson, et~al.
\newblock {\em The ants}.
\newblock Harvard University Press, 1990.

\bibitem{wenzel1991}
John~W Wenzel and John Pickering.
\newblock Cooperative foraging, productivity, and the central limit theorem.
\newblock {\em Proceedings of the National Academy of Sciences}, 88(1):36--38,
  1991.

\bibitem{jarvis1998}
Jennifer~UM Jarvis, Nigel~C Bennett, and Andrew~C Spinks.
\newblock Food availability and foraging by wild colonies of damaraland
  mole-rats (cryptomys damarensis): implications for sociality.
\newblock {\em Oecologia}, 113(2):290--298, 1998.

\bibitem{torney2009}
Colin Torney, Zoltan Neufeld, and Iain~D Couzin.
\newblock Context-dependent interaction leads to emergent search behavior in
  social aggregates.
\newblock {\em Proceedings of the National Academy of Sciences},
  106(52):22055--22060, 2009.

\bibitem{torney2011}
Colin~J Torney, Andrew Berdahl, and Iain~D Couzin.
\newblock Signalling and the evolution of cooperative foraging in dynamic
  environments.
\newblock {\em PLoS Comput Biol}, 7(9):e1002194, 2011.

\bibitem{feinerman2012}
Ofer Feinerman, Amos Korman, Zvi Lotker, and Jean-S{\'e}bastien Sereni.
\newblock Collaborative search on the plane without communication.
\newblock In {\em Proceedings of the 2012 ACM symposium on Principles of
  distributed computing}, pages 77--86, 2012.

\bibitem{lawley2020esp1}
S~D Lawley and J~B Madrid.
\newblock A probabilistic approach to extreme statistics of brownian escape
  times in dimensions 1, 2, and 3.
\newblock {\em J Nonlinear Sci}, pages 1--21, 2020.

\bibitem{ro2017}
S~Ro and Y~W Kim.
\newblock Parallel random target searches in a confined space.
\newblock {\em Phys Rev E}, 96(1):012143, 2017.

\bibitem{clementi2020}
Andrea Clementi, Francesco d'Amore, George Giakkoupis, and Emanuele Natale.
\newblock On the search efficiency of parallel {L}{\'e}vy walks on
  $\mathbb{Z}^{2}$.
\newblock {\em arXiv preprint arXiv:2004.01562}, 2020.

\bibitem{weiss1983}
G~H Weiss, K~E Shuler, and K~Lindenberg.
\newblock Order statistics for first passage times in diffusion processes.
\newblock {\em J Stat Phys}, 31(2):255--278, 1983.

\bibitem{lawley2020uni}
S~D Lawley.
\newblock Universal formula for extreme first passage statistics of diffusion.
\newblock {\em Phys Rev E}, 101(1):012413, 2020.

\bibitem{varadhan1967}
Sathamangalam R~Srinivasa Varadhan.
\newblock Diffusion processes in a small time interval.
\newblock {\em Commun Pure Appl Math}, 20(4):659--685, 1967.

\bibitem{bertoin1996}
Jean Bertoin.
\newblock {\em {L}{\'e}vy processes}, volume 121.
\newblock Cambridge {U}niversity {P}ress, 1996.

\bibitem{kim2012}
Panki Kim, Renming Song, and Zoran Vondra{\v{c}}ek.
\newblock Two-sided green function estimates for killed subordinate brownian
  motions.
\newblock {\em Proceedings of the London Mathematical Society},
  104(5):927--958, 2012.

\bibitem{madrid2020comp}
Jacob~B Madrid and Sean~D Lawley.
\newblock Competition between slow and fast regimes for extreme first passage
  times of diffusion.
\newblock {\em Journal of Physics A: Mathematical and Theoretical},
  53(33):335002, 2020.

\bibitem{lawley2020dist}
S~D Lawley.
\newblock Distribution of extreme first passage times of diffusion.
\newblock {\em Journal of Mathematical Biology}, 2020.

\bibitem{getoor1961}
RK~Getoor.
\newblock First passage times for symmetric stable processes in space.
\newblock {\em Transactions of the American Mathematical Society},
  101(1):75--90, 1961.

\bibitem{carnaffan2017}
Sean Carnaffan and Reiichiro Kawai.
\newblock Solving multidimensional fractional {F}okker--{P}lanck equations via
  unbiased density formulas for anomalous diffusion processes.
\newblock {\em SIAM Journal on Scientific Computing}, 39(5):B886--B915, 2017.

\bibitem{lawley2020sub}
Sean~D Lawley.
\newblock Extreme statistics of anomalous subdiffusion following a fractional
  fokker--planck equation: subdiffusion is faster than normal diffusion.
\newblock {\em Journal of Physics A: Mathematical and Theoretical},
  53(38):385005, 2020.

\bibitem{weng2017}
Tongfeng Weng, Jie Zhang, Michael Small, and Pan Hui.
\newblock Multiple random walks on complex networks: A harmonic law predicts
  search time.
\newblock {\em Physical Review E}, 95(5):052103, 2017.

\bibitem{lawley2020networks}
Sean~D Lawley.
\newblock Extreme first-passage times for random walks on networks.
\newblock {\em Physical Review E}, 102(6):062118, 2020.

\bibitem{lawley2021pdmp}
Sean~D Lawley.
\newblock Extreme first passage times of piecewise deterministic markov
  processes.
\newblock {\em arXiv preprint arXiv:1912.03438}, 2019.

\bibitem{durrett2019}
R~Durrett.
\newblock {\em Probability: theory and examples}.
\newblock Cambridge university press, 2019.

\end{thebibliography}
\bibliographystyle{unsrt}

\end{document}